\newcommand{\cf}[1]{\ensuremath{\cofin({#1})}}
\newcommand{\closure}[1]{\ensuremath{\overline{{#1}}}}
\newcommand{\diam}[1]{\ensuremath{\sdiam{({#1})}}}
\newcommand{\httree}[1]{\ensuremath{\height({#1})}}
\newcommand{\mapping}[3]{\ensuremath{{#1}:{#2}\longrightarrow{#3}}}
\newcommand{\nat}{\mathbb{N}}
\newcommand{\rat}{\mathbb{Q}}
\newcommand{\real}{\mathbb{R}}
\newcommand{\setcomp}[2]{\ensuremath{\{{#1}\;|\;\,{#2}\}}}
\newcommand{\weakstar}{\ensuremath{w^*}}
\newcommand{\wone}{\ensuremath{\omega_1}}
\DeclareMathOperator{\card}{card} %
\DeclareMathOperator{\cofin}{cf} %
\DeclareMathOperator{\height}{ht} %
\DeclareMathOperator{\rank}{rank} %
\DeclareMathOperator{\sdiam}{diam} %
\newtheorem{thm}{Theorem}[section]
\newtheorem{cor}[thm]{Corollary}
\newtheorem{lem}[thm]{Lemma}
\newtheorem{prop}[thm]{Proposition}
\theoremstyle{definition}
\newtheorem{defn}[thm]{Definition}
\begin{document}

\title{A characterisation of compact, fragmentable linear orders}
\author{R.\ J.\ Smith}
\address{Institute of Mathematics of the AS CR,
\v{Z}itn\'{a} 25, CZ - 115 67 Praha 1, Czech Republic}
\email{smith@math.cas.cz}

\subjclass[2000]{06A05; 54D30; 46B50; 46B26}
\date{November 2008}
\keywords{Fragmentability, linear order, Radon-Nikod\'{y}m compact,
strictly convex norm}

\begin{abstract}
We give a characterisation of fragmentable, compact linearly order
spaces. In particular, we show that if $K$ is a compact,
fragmentable, linearly ordered space then $K$ is a Radon-Nikod\'{y}m
compact. In addition, we obtain some corollaries in topology and
renorming theory.
\end{abstract}

\maketitle

\section{Introduction}

Throughout this note, all topological spaces are assumed to be
Hausdorff.

\begin{defn}\label{fragrn}
Let $K$ be a compact space.
\begin{enumerate}
\item\label{frag} We say that $K$ is {\em fragmentable} if there exists a metric
$\mapping{d}{K \times K}{[0,\infty)}$ with the property that given
any non-empty set $M \subseteq K$ and $\varepsilon > 0$, there
exists an open set $U \subseteq K$ satisfying $M \cap U \neq
\varnothing$ and $d$-$\diam{M \cap U} < \varepsilon$.
\item We say that $K$ is an Radon-Nikod\'{y}m compact, or RN compact, if there
exists a metric that is lower semicontinuous on $K \times K$ and
satisfies the conditions in (\ref{frag}).
\end{enumerate}
\end{defn}

Fragmentable and RN compact spaces have been the subject of enduring
study. The paper of Namioka \cite{namioka:87} contains many of the
fundamental results on RN compact spaces. For example, Namioka
showed that the definition of RN compacta given above is equivalent
to the original definition of RN compact spaces, namely that $K$ is
RN compact if it is homeomorphic to a $\weakstar$-compact subset of
an Asplund Banach space.

The most well known unsolved problem in the theory of RN compacta is
the question of whether the continuous image of an RN compact space
is again RN compact. In \cite{aviles:07}, it is proved that if $K$
is a linearly ordered compact space and the continuous image of a RN
compact, then $K$ is RN compact. This result uses a necessary
condition for $K$ to be fragmentable. We say that a compact space
$M$ is {\em almost totally disconnected} if it is homeomorphic to
some $A \subseteq [0,1]^\Gamma$ in the pointwise topology, with the
property that if $f \in A$ then $f(\gamma) \in (0,1)$ for at most
countably many $\gamma \in \Gamma$. The class of almost totally
disconnected spaces contains all Corson compact spaces and all
totally disconnected spaces.

\begin{thm}\cite[Theorem 3]{aviles:07}\label{avilesnec}
Let $K$ be a linearly ordered fragmentable compact. Then $K$ is
almost totally disconnected.
\end{thm}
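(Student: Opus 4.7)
The plan is to construct an explicit embedding $\Phi\colon K \hookrightarrow [0,1]^{\Gamma \sqcup \Lambda}$ witnessing the almost totally disconnected property, by separately handling the connected components of $K$ and the quotient by components.

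First I would establish a purely order-theoretic fact that requires no fragmentability: the quotient $K/\!\sim$ by the relation ``lies in the same connected component'' is totally disconnected. Indeed, any nontrivial connected $C^{*} \subseteq K/\!\sim$ would lift via the quotient map $\pi$ to $\pi^{-1}(C^{*})$, a closed order-convex sub-LOTS of $K$; but any disconnection of a compact LOTS occurs at a jump $a < b$ with $(a,b)\cap K = \emptyset$, and a jump is never contained in a single connected component of $K$, so such a disconnection would descend to a disconnection of $C^{*}$. Hence $\pi^{-1}(C^{*})$ is connected, i.e., a single component, forcing $C^{*}$ to be a point.

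Given this, I would enumerate the nondegenerate components as $\{C_\gamma = [a_\gamma, b_\gamma]\}_{\gamma \in \Gamma}$ and for each $\gamma$ take $\phi_\gamma\colon K \to [0,1]$ equal to $0$ on $\{x \le a_\gamma\}$, equal to $1$ on $\{x \ge b_\gamma\}$, and a continuous order-isomorphism of $C_\gamma$ onto $[0,1]$. For $K/\!\sim$, I would choose a separating family of clopen sets $\{U_\lambda\}_{\lambda \in \Lambda}$ and let $\psi_\lambda$ be the characteristic function of $\pi^{-1}(U_\lambda)$. The product map $\Phi = (\phi_\gamma, \psi_\lambda)$ is continuous and injective: points in the same $C_\gamma$ are separated by $\phi_\gamma$, while points in distinct components are separated by some $\psi_\lambda$. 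Moreover $\phi_\gamma(x) \in (0,1)$ forces $x \in (a_\gamma, b_\gamma)$, which can occur for at most one $\gamma$, and the $\psi_\lambda$ take values in $\{0,1\}$. Hence at most one coordinate of $\Phi(x)$ ever lies in $(0,1)$, strongly witnessing almost total disconnectedness.

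The main obstacle---and the only step that invokes fragmentability---is justifying that each nondegenerate component $C_\gamma$ is order-isomorphic to $[0,1]$. Since a connected compact LOTS is order-isomorphic to $[0,1]$ iff it is separable, I must show that the fragmenting metric forces separability of $C_\gamma$. My plan is to restrict $d$ to $C_\gamma$, use fragmentation to obtain for each $\varepsilon > 0$ a cover of $C_\gamma$ by open sets of $d$-diameter less than $\varepsilon$, and then exploit the LOTS structure (open sets are disjoint unions of order-convex open intervals) together with connectedness of $C_\gamma$ to argue that this cover can be taken countable; endpoints of such covers taken over $\varepsilon = 1/n$ would form a countable dense subset. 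Formalising this reduction---in particular, showing that connectedness of $C_\gamma$ precludes uncountable chains in the transfinite fragmentation process---is the technical heart of the argument, and where I expect the bulk of the work to lie.
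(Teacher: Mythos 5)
Your reduction collapses at the one step where fragmentability is actually invoked: the claim that every nondegenerate connected component $C_\gamma$ of a fragmentable compact LOTS is separable, hence order-isomorphic to $[0,1]$. This is false. The extended long line $L^{*}=(\omega_1\times[0,1))\cup\{\infty\}$, lexicographically ordered, is a densely ordered, complete, hence \emph{connected} compact LOTS, and it is Radon--Nikod\'{y}m compact (Namioka's Example 3.9(b), cited in the introduction of this paper; alternatively it satisfies condition (2) of Theorem \ref{fragcharac} with $L_n$ the set of points $(\alpha,k/2^{n})$ together with the top point, a compact scattered set of order type $\omega_1\cdot 2^{n}+1$). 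In particular $L^{*}$ is fragmentable and consists of a single nondegenerate component, yet it is not separable and admits no continuous injection into $[0,1]$, so the coordinate $\phi_\gamma$ you need does not exist. Correspondingly, the lemma you hope to prove at the ``technical heart'' of your argument --- that connectedness of $C_\gamma$ precludes uncountable chains in the transfinite fragmentation process --- cannot be true: fragmenting $L^{*}$ genuinely requires $\omega_1$ stages.

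The surrounding architecture is fine (total disconnectedness of the component quotient of a compact LOTS is standard, and the final bookkeeping with clopen separating families and order-preserving bump functions is exactly the right shape), but the decomposition must be strictly finer than the decomposition into connected components, since a single component may itself consume uncountably many coordinates. The example of $L^{*}$ shows what is actually needed: one increasing function per ``jump'' of a compact scattered skeleton (for $L^{*}$, one function supported on $[(\alpha,0),(\alpha+1,0)]$ for each $\alpha<\omega_1$), arranged so that each point lies in the open support of at most countably many of them. Extracting such a skeleton from an arbitrary fragmenting metric is the real content of Avil\'{e}s's theorem, and it is precisely the structure that condition (2) of Theorem \ref{fragcharac} encodes via the sets $\Delta_n$ of consecutive pairs in the scattered sets $L_n$; nothing in your proposal produces it.
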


In \cite{aviles:07}, Avil\'{e}s asks whether a linearly ordered
compact space $K$ is RN compact whenever $K$ is fragmentable. In
this note, we characterise compact, fragmentable, linearly ordered
spaces.

\begin{thm}
\label{fragcharac} Let $K$ be a compact, linearly ordered space.
Then the following are equivalent.
\begin{enumerate}
\item $K$ is fragmentable;
\item there is a family $L_n$, $n \in \nat$, of compact, scattered
subsets of $K$, with union $L$, such that whenever $u,v \in K$ and
$u < v$, there exist $x,y \in L$ satisfying $u \leq x < y \leq v$.
\item $K$ is RN compact.
\end{enumerate}
\end{thm}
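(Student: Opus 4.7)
The implication (3) $\Rightarrow$ (1) is immediate from Definition \ref{fragrn}, since an LSC fragmenting metric is \emph{a fortiori} fragmenting. The substance lies in (1) $\Rightarrow$ (2) and (2) $\Rightarrow$ (3).

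For (1) $\Rightarrow$ (2), starting from a metric $d$ witnessing fragmentability of $K$, I would for each $n \in \nat$ carry out a transfinite derivation at scale $1/n$: set $K_n^0 = K$, and at successor stages remove from $K_n^\alpha$ the union of all relatively open convex subsets of $d$-diameter less than $1/n$, taking intersections at limits. Fragmentability ensures the derivation strictly decreases and so terminates; the endpoints of the convex pieces removed along the way assemble into a compact set $L_n$, which, thanks to Theorem \ref{avilesnec} and the resulting almost totally disconnected structure of $K$, can be shown to be scattered. If $u < v$ in $K$, then $d(u, v) > 1/n$ for some $n$, so $[u, v]$ is not wiped out at scale $1/n$ and must therefore contain two points of $L_n$, as required.

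For (2) $\Rightarrow$ (3), the plan is to build an LSC fragmenting metric directly from the $L_n$'s. Each compact scattered linearly ordered $L_n$ is order-homeomorphic to a compact ordinal interval, and so carries a natural LSC fragmenting metric $\rho_n$ built from its Cantor--Bendixson rank function. After adjoining $\min K$ and $\max K$ to each $L_n$, define the one-sided projections $\pi_n^-(x) = \max\{y \in L_n : y \leq x\}$ and $\pi_n^+(x) = \min\{y \in L_n : y \geq x\}$. Condition (2) is precisely the statement that the joint map $x \mapsto (\pi_n^-(x), \pi_n^+(x))_n$ separates points of $K$. From this data I would assemble, with some care, an LSC fragmenting metric $d$ on $K$: the formula mixes $\pi_n^-$ and $\pi_n^+$ in a way that exploits the upper-semicontinuity of $\pi_n^-$ and the lower-semicontinuity of $\pi_n^+$ (both as maps into $L_n$ with the order topology), so that the summands, and hence $d$ itself, remain LSC on $K \times K$.

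The main obstacle, as I see it, is arranging \emph{lower semicontinuity} in (2) $\Rightarrow$ (3). Producing some fragmenting metric from (2) is straightforward; what is delicate is that the projections $\pi_n^\pm$ jump at the gaps of $L_n$, and a naive composition like $\rho_n(\pi_n^-(u), \pi_n^-(v))$ is typically \emph{not} LSC. The formula for $d$ must therefore be chosen so that the direction of each $\pi_n^\pm$-jump matches an LSC-compatible direction in $\rho_n$; this is the core technical content of the implication. By comparison, (1) $\Rightarrow$ (2) is a more standard Cantor--Bendixson-style argument built on top of Theorem \ref{avilesnec}.
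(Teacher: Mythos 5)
Your implication (3) $\Rightarrow$ (1) is fine, but both substantive implications have genuine gaps, and the first is the more serious. For (1) $\Rightarrow$ (2), the derivation you describe does not produce the sets you need; this already fails for $K=[0,1]$ with the usual metric. At scale $1/n$ every point of $[0,1]$ lies in a relatively open convex set of diameter less than $1/n$, so the very first derivation step removes all of $K$, and the endpoints of the removed convex pieces (whether you take all of them or only the maximal ones) are dense in $[0,1]$; their closure is $[0,1]$, which is not scattered. Your concluding step is also false as stated: an interval $[u,v]$ with $d(u,v)>1/n$ can perfectly well be ``wiped out'' at the first stage of the scale-$1/n$ derivation, since it is covered by many small open convex sets even though its own diameter is large. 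Moreover, Theorem \ref{avilesnec} cannot supply the scatteredness of $L_n$: the paper notes that the split interval is $0$-dimensional, hence almost totally disconnected, yet not fragmentable, so almost total disconnectedness is too weak to do this work. The real difficulty in (1) $\Rightarrow$ (2) --- which your sketch never engages --- is controlling the limit levels of a partition tree of $K$; the paper does this via the notion of \emph{simple} subsets of $T_\alpha$ (Propositions \ref{openpartition} and \ref{fragsimple}), where non-simplicity is exactly the split-interval obstruction, and then extracts the $L_n$ from a rank function on a partition of the tree into open intervals.

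For (2) $\Rightarrow$ (3) you have correctly identified the obstacle --- lower semicontinuity of any metric built from the projections $\pi_n^\pm$, which jump at the gaps of $L_n$ --- but you have not overcome it: ``the formula mixes $\pi_n^-$ and $\pi_n^+$ in a way that exploits\dots'' is a statement of intent, not a construction, and it is precisely the step on which such an argument lives or dies. (A smaller inaccuracy: a compact scattered linear order need not be order-isomorphic to an ordinal interval; consider $\omega+1$ followed by a reversed copy of $\omega$.) The paper sidesteps the explicit LSC metric entirely by invoking Namioka's criterion (Theorem \ref{namrn}): it builds a uniformly bounded family of \emph{increasing continuous functions} $f_{x,y}$, one for each consecutive pair $(x,y)$ in $L_n$; point separation follows from condition (2) together with the existence of isolated points of $[x,y]\cap L_n$, and $d_A$-separability for countable $A$ follows because a compact, separable, scattered linear order is countable. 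If you wish to keep your route, you must either write down the LSC formula and verify it, or switch to a criterion like Theorem \ref{namrn}, where lower semicontinuity is automatic since the pseudometrics are suprema of $|f(x)-f(y)|$ over continuous $f$.
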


In doing so, we obtain Avil\'{e}s's result concerning continuous
images.

\begin{cor}\cite[Corollary 4]{aviles:07}\label{avilesrn}
Let $K$ be a compact, linearly ordered space that is also a
continuous image of a RN compact. Then $K$ is a RN compact.
\end{cor}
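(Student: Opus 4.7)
The plan is to reduce the corollary to Theorem \ref{fragcharac} by showing that the hypothesis forces $K$ to be fragmentable. Since $K$ is compact and linearly ordered, once fragmentability is established, the implication $(1) \Rightarrow (3)$ of Theorem \ref{fragcharac} delivers that $K$ is RN compact.

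First I would recall two standard observations. An RN compact is, by Definition \ref{fragrn}, fragmentable (taking the lower semicontinuous metric witnessing the RN property also witnesses fragmentability). Thus the hypothesis says that $K$ is the continuous image of a fragmentable compact. The second ingredient is the well known fact that fragmentability is preserved under continuous images of compact spaces: if $\mapping{\varphi}{L}{K}$ is a continuous surjection with $L$ a compact fragmentable space witnessed by a metric $d$, then the pseudometric $\rho(x,y) = \inf\{d(u,v) : \varphi(u)=x,\ \varphi(v)=y\}$ on $K$ can be replaced by an honest fragmenting metric using a standard argument (for instance via the Jayne--Namioka--Rogers/Ribarska machinery, or directly by applying the fragmentability definition on non-empty preimages in $L$ and pushing the resulting relatively open slice forward through $\varphi$). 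Applying this to our situation, $K$ is fragmentable.

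Having fragmentability of $K$, the final step is merely to quote our main theorem: by Theorem \ref{fragcharac}, condition (1) is equivalent to condition (3), so $K$ is RN compact.

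Strictly speaking, there is no real obstacle beyond invoking the preservation of fragmentability under continuous images, which is classical; the whole content of the corollary sits inside Theorem \ref{fragcharac}. It is worth emphasising that the non-trivial direction used here is $(1) \Rightarrow (3)$, the new contribution of this note, since the converse $(3) \Rightarrow (1)$ is immediate from the definitions.
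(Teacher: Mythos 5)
Your proposal is correct and follows the paper's own route exactly: RN compact implies fragmentable, fragmentability passes to continuous images of compacta (the paper cites Ribarska, Proposition 2.8, for this classical fact rather than sketching the argument), and then Theorem \ref{fragcharac}, (1) $\Rightarrow$ (3), finishes. The only caveat is that your ad hoc infimum pseudometric sketch is not itself a proof of the preservation step, so one should simply cite the known result as the paper does.
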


In fact, Corollary \ref{avilesrn} is originally stated in
\cite{aviles:07} in terms of {\em quasi-Radon-Nikod\'{y}m} compact
spaces, which we won't define here. All we need to know is that if
$K$ is a continuous image of a RN compact then it is a quasi-RN
compact, which in turn implies that $K$ is fragmentable. With this
in mind, we will see that the original statement also follows from
Theorem \ref{fragcharac}.

The proof of Theorem \ref{fragcharac}, (1) $\Rightarrow$ (2), is the
subject of Sections \ref{simplesubsets} and \ref{partitionsection}.
As a byproduct of this investigation, we obtain more results. We
denote the first uncountable cardinal by $\wone$.

\begin{prop}
\label{fragweight} Let $K$ a compact, fragmentable, linearly ordered
space, and assume that $K$ contains no order-isomorphic copy of
$\kappa$, where $\kappa$ is a regular, uncountable cardinal. Then
the topological weight of $K$ is strictly less than $\kappa$. In
particular, if $K$ contains no copy of $\wone$ then $K$ is
metrisable.
\end{prop}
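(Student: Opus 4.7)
The plan is to use Theorem \ref{fragcharac} and bound the topological weight of $K$ by a cardinality count on compact scattered pieces. By the implication $(1) \Rightarrow (2)$ of Theorem \ref{fragcharac}, fix compact scattered subsets $L_n \subseteq K$ ($n \in \nat$) with union $L$ such that for every pair $u < v$ in $K$ there are $x, y \in L$ with $u \leq x < y \leq v$. The strategy is to show $|L| < \kappa$ and then exhibit a base for the order topology of size at most $|L| + \aleph_0$.

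The crux is to show $|L_n| < \kappa$ for every $n$. Each $L_n$ is a compact, scattered, linearly ordered space inheriting from $K$ the property of containing no copy of $\kappa$. I plan to argue that any compact scattered LOTS $M$ of cardinality $\geq \kappa$ contains a strictly monotone chain of order type $\kappa$ or of its reverse $\kappa^*$. Running the Cantor--Bendixson derivation: if the rank of $M$ is at least $\kappa$, a transfinite recursion through the nested non-empty closed derivatives $M \supseteq M' \supseteq M'' \supseteq \cdots$ produces the chain; if the rank is less than $\kappa$, regularity of $\kappa$ forces some single level $M^{(\alpha)} \setminus M^{(\alpha+1)}$ to contribute $\geq \kappa$ isolated points of the compact space $M^{(\alpha)}$, and a complete accumulation point of these in $M$ (which exists by compactness) is approached monotonically from one side by a cofinal subfamily of size $\kappa$, delivering the chain. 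Regularity and uncountability of $\kappa$ then give $|L| = |\bigcup_n L_n| < \kappa$.

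To finish, verify that the open intervals $(a,b)$ with $a,b \in L$, together with the at most two half-open intervals $[\min K, b)$ and $(a, \max K]$ (when such endpoints exist), form a base for the order topology on $K$. Given $p$ in an open interval $(u,v) \subseteq K$, apply the density property to the pair $u < p$ (when $p \neq \min K$) and to $p < v$ (when $p \neq \max K$) to produce elements of $L$ bracketing $p$ strictly inside $(u,v)$, yielding a basic neighbourhood of $p$ with endpoints in $L \cup \{\min K, \max K\}$ contained in $(u,v)$. Hence $w(K) \leq |L| + \aleph_0 < \kappa$. The specialisation to $\kappa = \wone$ gives $w(K) \leq \aleph_0$, so $K$ is a compact Hausdorff space with a countable base and is therefore metrisable. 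The principal obstacle is the extraction of a long monotone chain in a compact scattered LOTS of cardinality $\geq \kappa$: scatteredness and compactness together just barely supply enough structure, with particular care needed for the regularity of $\kappa$ and for the orientation of the chain produced (which is why it is natural to phrase the argument as excluding both $\kappa$ and $\kappa^*$).
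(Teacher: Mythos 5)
Your route is genuinely different from the paper's (which works directly with admissible partition trees, simple sets and the pressing-down lemma rather than with the conclusion of Theorem \ref{fragcharac}), and there is no circularity in quoting (1) $\Rightarrow$ (2); your final step, that intervals with endpoints in $L \cup \{\min K,\max K\}$ form a base of size $|L|+\aleph_0$, is also fine. The problem is that the reduction concentrates all of the difficulty into the one claim you do not prove: that a compact, scattered, linearly ordered space of cardinality $\geq\kappa$ contains a chain of type $\kappa$ or $\kappa^*$. Both halves of your sketch of this claim have real gaps. In the high-rank case, choosing one point from each Cantor--Bendixson level gives no monotone chain, since the levels are interleaved in the order; you would have to localise to a nested family of closed intervals, which is not automatic. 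In the fat-level case, a complete accumulation point $p$ of a relatively discrete set $D$ of size $\kappa$ need \emph{not} be approached monotonically by $\kappa$ members of $D$: every interval $(x,p)$ may meet $D$ in $\kappa$ points while the left cofinality of $p$ in $K$ is only $\omega$, so that every increasing chain in $D$ converging to $p$ is countable. One is then forced to recurse into one of countably many subintervals still carrying $\kappa$ points of $D$, and justifying that this recursion terminates is exactly the delicate point --- it is what the paper's simple-set machinery (Lemmas \ref{boundedregressive}--\ref{condensation}) is built to control. A clean repair is available: a compact, topologically scattered linearly ordered space contains no order-isomorphic copy of $\rat$ (the suprema in $K$ of the irrational cuts of such a copy would form a non-empty dense-in-itself subset), so each $L_n$ is order-scattered, and Hausdorff's theorem that order-scattered linear orders are generated from singletons by well-ordered and reversely well-ordered sums yields the desired chain by induction on Hausdorff rank, using the regularity of $\kappa$. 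As written, though, the central lemma is asserted rather than proved.

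On the orientation issue you flag in your last sentence: you are right, and you should commit to it. Your argument only reaches a contradiction if $K$ omits both $\kappa$ and $\kappa^*$, and this stronger hypothesis is genuinely needed: the reverse of the ordinal $\kappa+1$ is compact, scattered (hence fragmentable), contains no order-isomorphic copy of $\omega$, let alone of $\kappa$, yet has weight $\kappa$. The paper's own argument has the same feature (Lemma \ref{obviousconsequences}(4) produces a well-ordered \emph{or} a conversely well-ordered set of size $\kappa$), and its application of Proposition \ref{fragweight} to the split interval explicitly checks both orientations, so the intended hypothesis is the two-sided one; state and prove that version rather than leaving the choice of orientation implicit.
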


The next corollary extends a theorem in \cite{talagrand:86}, which
states that $C(\wone + 1)$ admits no equivalent norm with a strictly
convex dual norm.

\begin{cor}
\label{dualr} Let $K$ be a compact, linearly ordered set, and
suppose that $C(K)$ admits an equivalent norm with strictly convex
dual norm. Then $K$ is metrisable.
\end{cor}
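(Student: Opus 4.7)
The plan is to combine Talagrand's theorem with Proposition~\ref{fragweight}. I aim to verify the hypotheses of Proposition~\ref{fragweight} with $\kappa=\wone$, namely that $K$ is fragmentable and contains no order-isomorphic copy of $\wone$. Granted these two facts, $K$ has countable topological weight, and a compact linearly ordered space of countable weight is metrisable.

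The first piece I would establish is an inheritance lemma: strict convexity of the dual norm on $C(K)$ passes to $C(F)$ for every closed subset $F\subseteq K$. By Tietze extension the restriction map $R:C(K)\to C(F)$ is surjective, so the quotient norm $|f|:=\inf\{\norm{g}:Rg=f\}$ is an equivalent norm on $C(F)$. Its dual norm on $C(F)^{*}\cong M(F)$ is exactly the pullback of $\dual{\norm{\cdot}}$ along the isometric adjoint embedding $R^{*}:M(F)\hookrightarrow M(K)$, and strict convexity is inherited by norms on subspaces. To rule out order copies of $\wone$, suppose $(x_{\alpha})_{\alpha<\wone}$ were such a copy; compactness of $K$ forces $x_{\wone}:=\sup_{\alpha}x_{\alpha}$ to lie in $K$, and $F=\{x_{\alpha}:\alpha\leq\wone\}$ is a closed subspace of $K$ whose subspace topology coincides with the order topology on $\wone+1$. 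By the inheritance lemma, $C(\wone+1)$ would admit an equivalent norm with strictly convex dual, directly contradicting Talagrand's theorem.

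The principal obstacle is securing fragmentability of $K$. Strict convexity of $\dual{\norm{\cdot}}$ provides some extremal structure via the Dirac embedding $k\mapsto\delta_{k}$---on the $\weakstar$-closed convex hull of any $\weakstar$-compact subset of Diracs, the $\dual{\norm{\cdot}}$-maximiser is unique (by strict convexity) and, being an extreme point (by Bauer), is itself a Dirac measure (by Milman's theorem)---but this alone is not enough to fragment $(B_{C(K)^{*}},\weakstar)$ by the dual norm in a general Banach space setting. The linear order on $K$ must be exploited: one route is to verify condition~(2) of Theorem~\ref{fragcharac} directly, constructing a countable family of compact scattered subsets of $K$ that are order-dense between any $u<v$ in $K$, using that for a linearly ordered $K$ the ``unique-support'' information supplied by strict convexity collapses to order-theoretic data.

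Once fragmentability is in hand, Proposition~\ref{fragweight} applies with $\kappa=\wone$, giving that the weight of $K$ is strictly less than $\wone$, hence countable, hence $K$ is metrisable.
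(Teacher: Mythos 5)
Your overall strategy is the paper's: establish that $K$ is fragmentable and contains no copy of $\wone$, then invoke Proposition~\ref{fragweight} with $\kappa=\wone$. Your treatment of the second ingredient is fine --- the paper simply cites Talagrand, and your quotient-norm argument (surjectivity of the restriction map $C(K)\to C(F)$, strict convexity of the dual of a quotient norm being read off from the isometric embedding $R^{*}$, hence inherited by $M(F)$) is the standard way to make that citation precise for the closed subspace $\{x_\alpha : \alpha\le\wone\}$.

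The genuine gap is fragmentability: you identify it as ``the principal obstacle'' and then do not close it. The sketch you offer (uniqueness of the $\dual{\normdot}$-maximiser on $\weakstar$-closed convex hulls of Diracs, Bauer, Milman) is not developed into an argument, and the proposed fallback --- verifying condition~(2) of Theorem~\ref{fragcharac} directly from ``unique-support information'' --- is exactly the hard content you would need to supply and have not. Moreover, you are aiming at too strong a target when you worry about fragmenting $(B_{C(K)^{*}},\weakstar)$ \emph{by the dual norm}: that is indeed unavailable in general (it is essentially a Radon--Nikod\'{y}m-type property), but Definition~\ref{fragrn} only requires fragmentation by \emph{some} metric. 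The paper gets this for free from Ribarska's theorem \cite[Theorem 1.1]{ribarska:92}: a strictly convex dual norm makes the predual norm G\^{a}teaux smooth (\v{S}mulyan duality), whence $(B_{C(K)^{*}},\weakstar)$ is fragmentable by some metric; $K$ embeds into it via $k\mapsto\delta_{k}$, and fragmentability is hereditary. Without that (or an equivalent) input, your proof does not go through.
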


This leads directly to the final result.

\begin{cor}
\label{gru} Let $K$ be a compact, linearly ordered, Gruenhage space.
Then $K$ is metrisable.
\end{cor}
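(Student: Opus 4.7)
The plan is to deduce Corollary \ref{gru} from Corollary \ref{dualr}. Given that $K$ is compact, linearly ordered and Gruenhage, it suffices to exhibit an equivalent norm on $C(K)$ whose dual norm is strictly convex; Corollary \ref{dualr} then forces $K$ to be metrisable.

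For this reduction I would invoke a known renorming result: if $K$ is a Gruenhage compact, then $C(K)$ admits an equivalent norm whose dual norm is LUR, and in particular strictly convex. This is a standard consequence of the defining Gruenhage property, which supplies a sequence of families of open subsets of $K$ that separates the points of $K^2 \setminus \Delta$ in a controlled way. From such a sequence of families one constructs a dual LUR norm on $\mathcal{M}(K) = C(K)^*$ by the usual sum-of-squares averaging technique of renorming theory; this construction is well-documented in the literature on Gruenhage compacta and their relationship with renormings of $C(K)$, and I would cite it rather than reprove it.

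With the renorming in hand, $K$ is a compact, linearly ordered space such that $C(K)$ has an equivalent norm with strictly convex dual norm, and Corollary \ref{dualr} then yields that $K$ is metrisable. The only step of real substance is the Gruenhage-to-dual-renorming implication, which is classical and essentially orthogonal to the order-theoretic content of the present note; modulo that citation, Corollary \ref{gru} is a one-line deduction from Corollary \ref{dualr}.
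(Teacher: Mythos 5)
Your argument is exactly the paper's: the author quotes \cite[Theorem 7]{smith:08} (Gruenhage $\Rightarrow$ $C(K)$ has an equivalent norm with strictly convex dual norm) and then applies Corollary \ref{dualr}. One caveat: the result you should cite gives a \emph{strictly convex} dual norm, not a dual LUR norm --- the LUR version is a genuinely stronger assertion (tied to $K$ being descriptive) that is not what the literature on Gruenhage compacta provides, so you should drop that claim and keep only the strict convexity, which is all the deduction needs.
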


It is worth noting that we cannot simply demand that the union $L$
in Theorem \ref{fragcharac}, part (2), is topologically dense in
$K$. If we let $K = [0,1] \times \{0,1\}$ be the lexicographically
ordered `split interval', then $K$ is separable. However, it is well
known not to be fragmentable. We provide a proof of this at the
beginning of section \ref{simplesubsets}. Alternatively, we can use
Proposition \ref{fragweight}, since it is easy to show that $K$ is
not second countable and does not contain any uncountable well
ordered or conversely well ordered subsets. Incidentally, since $K$
is 0-dimensional, this example shows that the necessary condition of
Theorem \ref{avilesnec} is not sufficient. Moreover, it is not
possible to deduce Proposition \ref{fragweight} from Theorem
\ref{avilesnec}.

We conclude this section by proving Theorem \ref{fragcharac}, (2)
$\Rightarrow$ (3). The argument is a straightforward elaboration of
Namioka's proof that the `extended long line' is RN compact; see
\cite[Example 3.9, (b)]{namioka:87} for details. Of course, Theorem
\ref{fragcharac}, (3) $\Rightarrow$ (1), follows immediately from
Definition \ref{fragrn}. We shall use $(x,y)$ to denote both ordered
pairs and open intervals. The interpretation of the notation should
be clear from the context. We make use of the following
characterisation of RN compacta.

\begin{thm}[{\cite[Corollary 3.8]{namioka:87}}]
\label{namrn} A compact Hausdorff space $K$ is RN compact if and
only if there exists a norm bounded set $\Gamma \subseteq C(K)$ such
that
\begin{enumerate}
\item $\Gamma$ separates points of $K$, and
\item for every countable set $A \subseteq \Gamma$, $K$ is
separable, relative to the pseudo-metric $d_A$, given by
$$
d_A(x,y) \;=\; \sup\setcomp{|f(x) - f(y)|}{f \in A}
$$
for $x,y \in K$.
\end{enumerate}
\end{thm}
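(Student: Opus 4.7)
For the forward direction (RN compact $\Rightarrow$ (1), (2)), I would invoke Namioka's original definition: $K$ embeds as a $\weakstar$-compact subset of $\dual{X}$ for some Asplund space $X$. Take $\Gamma$ to be the restriction of the unit ball $\ball{X}$ to $K$ via the evaluation map; this $\Gamma$ is norm-bounded and separates points of $K$ by Hahn--Banach. Given a countable $A \subseteq \Gamma$, the closed linear span $Y \subseteq X$ of the associated functionals is separable, so $\dual{Y}$ is norm-separable by the Asplund hypothesis. Since $d_A$ is dominated by the pullback of the $\dual{Y}$-norm under the $\weakstar$-continuous restriction $\dual{X} \to \dual{Y}$, separability of $(K, d_A)$ follows.

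For the converse, I would normalise so that $\norm{f}_\infty \leq 1$ for every $f \in \Gamma$ and define $d(x,y) = \sup\setcomp{|f(x)-f(y)|}{f \in \Gamma}$. By (1) this is a metric, and as a pointwise supremum of continuous functions on $K \times K$ it is lower semicontinuous. The remaining task is to show that $d$ fragments $K$.

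Suppose toward a contradiction that some nonempty $M \subseteq K$ and $\varepsilon > 0$ witness failure of fragmentation, so that every nonempty relatively open $V \subseteq M$ has $d$-$\diam{V} \geq \varepsilon$. I would build a dyadic Cantor scheme of nonempty relatively open sets $(V_s)_{s \in 2^{<\omega}}$ in $M$, together with functions $f_s \in \Gamma$, as follows. At a node $s$, the failure hypothesis supplies $x_s, y_s \in V_s$ and $f_s \in \Gamma$ with $|f_s(x_s) - f_s(y_s)| > \varepsilon/2$; I take $V_{s0}$ and $V_{s1}$ to be the intersections of $V_s$ with the preimages under $f_s$ of sufficiently short open intervals around $f_s(x_s)$ and $f_s(y_s)$ respectively, arranging a gap of at least $\varepsilon/3$ between $f_s(\overline{V_{s0}})$ and $f_s(\overline{V_{s1}})$. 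For each branch $\sigma \in 2^\omega$, compactness yields a point $x_\sigma \in \bigcap_n \overline{V_{\sigma \restrict n}}$. Setting $A = \setcomp{f_s}{s \in 2^{<\omega}}$, a countable subset of $\Gamma$, distinct branches that first diverge at a node $s$ satisfy $|f_s(x_\sigma) - f_s(x_\tau)| > \varepsilon/3$, producing $2^{\aleph_0}$ points of $K$ pairwise $\varepsilon/3$-separated in $d_A$, contradicting the separability of $(K, d_A)$ given by (2).

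The main obstacle will be the Cantor-scheme construction: one must choose the short intervals around $f_s(x_s)$ and $f_s(y_s)$ carefully so that $V_{s0}$ and $V_{s1}$ remain nonempty and relatively open in $M$, and the $\varepsilon/3$ separation must survive passage to the closures $\overline{V_{s0}}, \overline{V_{s1}}$, which relies on the continuity of each individual $f_s$.
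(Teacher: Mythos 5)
This statement is quoted from Namioka's paper as an external result; the present paper gives no proof of it, so there is nothing internal to compare your argument against. Judged on its own merits, your proof is correct and is essentially the standard argument for Namioka's characterisation. Two cosmetic remarks on the forward direction: separating points of $K \subseteq \dual{X}$ needs no Hahn--Banach (two distinct functionals differ at some point of $\ball{X}$ by definition), and the elements of $\ball{X}$ whose span you take are vectors rather than functionals; the substance --- separable subspaces of Asplund spaces have separable duals, and $d_A$ is dominated by the $\dual{Y}$-norm pulled back through the $\weakstar$-continuous restriction map --- is right. In the converse, note that your two directions use the two different formulations of RN compactness (the Asplund-dual embedding versus the lower semicontinuous fragmenting metric of Definition \ref{fragrn}); the paper explicitly records Namioka's theorem that these coincide, so this is harmless. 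The ``obstacle'' you flag at the end is not a real one: choosing the two open intervals of length less than $\varepsilon/12$ centred at $f_s(x_s)$ and $f_s(y_s)$ keeps $V_{s0}$ and $V_{s1}$ non-empty and relatively open, and since $f_s(\overline{V_{si}}) \subseteq \overline{f_s(V_{si})}$ by continuity of $f_s$, the $\varepsilon/3$ gap survives passage to closures. Finally, the concluding contradiction is genuine: in a $d_A$-separable pseudo-metric space any family of pairwise $\varepsilon/3$-separated points injects into a countable dense set by sending each point to a dense-set member within $\varepsilon/6$ of it, so it cannot have cardinality $2^{\aleph_0}$.
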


\begin{proof}[Proof of Theorem \ref{fragcharac}, (2) $\Rightarrow$ (3)]
Let $L_n$ be as in (2). We shall assume that $\min K$, $\max K \in
L_1$ and $L_n \subseteq L_{n+1}$ for all $n$. Let
$$
\Delta_n \;=\; \setcomp{(x,y) \in L_n^2}{x < y\mbox{ and }(x,y) \cap
L_n \mbox{ is empty}}.
$$
For each $(x,y) \in \Delta_n$, take an increasing function $f_{x,y}
\in C(K)$ such that $f(w) = 0$ for $w \leq x$ and $f(z) = n^{-1}$
for $z \geq y$. We claim that the family $f_{x,y}$, $(x,y) \in
\Delta_n$, $n \in \nat$ separates points of $K$ and, moreover, if $A
\subseteq \bigcup_{n=1}^\infty \Delta_n$ is countable, then $K$ is
$d_A$-separable, where $d_A$ is the pseudo-metric defined by
$$
d_A(u,v) \;=\; \sup \setcomp{|f_{x,y}(u) - f_{x,y}(v)|}{(x,y) \in
A}.
$$
First, let $u,v \in K$, with $u < v$. By the hypothesis and the fact
that $L_n \subseteq L_{n+1}$, there exists $n$ and $x,y \in L_n$
such that $u \leq x < y \leq v$. If we take an isolated point $w$ of
$[x,y] \cap L_n$ then at least one of the points
$$
\inf\setcomp{z \in [x,y] \cap L_n}{w < z},\quad \sup\setcomp{z \in
[x,y] \cap L_n}{z < w}
$$
is in $[x,y] \cap L_n$ and necessarily not equal to $w$. Therefore,
we can find $(x^\prime,y^\prime) \in \Delta_n$ with $x \leq x^\prime
< y^\prime \leq y$. It follows that $f_{x^\prime,y^\prime}$
separates $u$ and $v$.

Now let $A \subseteq \bigcup_{n=1}^\infty \Delta_n$ be countable. We
set $A_n = A \cap \Delta_n$ and
$$
M_n \;=\; \closure{\bigcup_{(x,y) \in A_n} \{x,y\}}.
$$
Since $M_n$ is compact, separable, scattered and linearly ordered,
it is countable. If $(x,y) \in \Delta_n$ then observe that, for
every $i \leq n$, there is a unique $(x_i,y_i) \in \Delta_i$
satisfying $x_i \leq x < y \leq y_i$. Define
$$
\Gamma_{x,y,n} \;=\; \setcomp{((q_1,r_1),\ldots,(q_n,r_n)) \in
(\rat^2)^n}{q_k < r_k \mbox{ and } \bigcap_{i=1}^n
f^{-1}_{x_i,y_i}(q_i,r_i) \neq \varnothing}
$$
and, for every $(q,r) = ((q_1,r_1),\ldots,(q_n,r_n)) \in
\Gamma_{x,y,n}$, take
$$
z_{x,y,n,q,r} \in L \cap \bigcap_{i=1}^n f^{-1}_{x_i,y_i}(q_i,r_i).
$$
We claim that the countable set
$$
D \;=\; \{\min K,\max K\} \cup \bigcup_{n=1}^\infty M_n \cup
\setcomp{z_{x,y,n,q,r}}{(x,y) \in A_n, (q,r) \in
\Gamma_{x,y,n}\mbox{ and } n \in \nat}
$$
is $d_A$-dense in $K$.

If $w \in K$ and $n \in \nat$, we find $z \in D$ such that $d_A(w,z)
< n^{-1}$. We assume $w \notin D$ and let
$$
M \;=\; \{\min K,\max K\} \cup \bigcup_{k=1}^n M_k.
$$
If $w \in (x,y)$ and $(x,y) \in A_k$ for some $k \leq n$ then let
$k$ be maximal. Otherwise, let $k = 0$. Since $M$ is closed and
$\min K,\max K \in M$, we can find $u,v \in M$, such that $(u,v)
\cap M$ is empty and $w \in (u,v)$. Assume that $k < j \leq n$ and
$(x,y) \in A_j$. We must have $f_{x,y}(u) = f_{x,y}(w) =
f_{x,y}(v)$. Indeed, if $(x,y) \in A_j$ then $x,y \in M$. Since
$(u,v) \cap M$ is empty, either $x \leq u$ and $v \leq y$, or $y
\leq u$, or $v \leq x$. However, the first possibility cannot hold
because $w \notin (x,y)$.

Now let $w \in (x,y)$, where $(x,y) \in A_k$. For $i \leq k$, take
rationals $q_i$ and $r_i$ such that $q_i < f_{x_i,y_i}(u) < r_i$ and
$r_i - q_i < n^{-1}$. Let $(x^\prime,y^\prime) \in A_i$, where $i
\leq k$. If $(x^\prime,y^\prime) \neq (x_i,y_i)$ then since
$w,z_{x,y,n,q,r} \in (x,y) \subseteq (x_i,y_i)$, we have
$$
f_{x^\prime,y^\prime}(w) \;=\; f_{x^\prime,y^\prime}(z_{x,y,n,q,r}).
$$
On the other hand, if $(x^\prime,y^\prime) = (x_i,y_i)$ then we have
ensured that
$$
|f_{x^\prime,y^\prime}(w) - f_{x^\prime,y^\prime}(z_{x,y,n,q,r})|
\;<\; n^{-1}.
$$
If $z_{x,y,n,q,r} \leq w$ then set $z = \max\{u,z_{x,y,n,q,r}\}$,
and if $w < z_{x,y,n,q,r}$ then set $z = \min\{v,z_{x,y,n,q,r}\}$.
By the construction, and the fact that the $f_{x,y}$ are increasing,
we have made sure that $|f_{x,y}(w) - f_{x,y}(z)| < n^{-1}$ whenever
$(x,y) \in \bigcup_{i=1}^n A_n$. If $(x,y) \in A_m$ and $n < m$,
then $|f_{x,y}(w) - f_{x,y}(z)| \leq m^{-1} < n^{-1}$. Therefore
$d_A(w,z) < n^{-1}$. That $K$ is RN compact now follows from Theorem
\ref{namrn}.
\end{proof}

\section{Simple subsets of trees}\label{simplesubsets}

It is a standard result in elementary analysis that every
uncountable set $H \subseteq \real$ contains an uncountable subset
$E$, with the property that each $x \in E$ is a two-sided
condensation point of $E$; that is, given $x \in E$ and $\varepsilon
> 0$, both $(x - \varepsilon,x) \cap E$ and $(x,x + \varepsilon)
\cap E$ are uncountable. As explained in \cite{aviles:07}, the
abundance of condensation points in each uncountable subset of
$\real$ can be used to show that the split interval $K = [0,1]
\times \{0,1\}$ is not fragmentable. It is worth repeating the
argument here. If $d$ is a metric on $K$, then there exists an
uncountable subset $H \subseteq [0,1]$ with the property that
$d((x,0),(x,1)) \geq n^{-1}$ for all $x \in H$. If $E \subseteq H$
is as above, then for every open subset $U \subseteq K$ such that $U
\cap (E \times \{0,1\})$ is non-empty, $d$-$\diam{U \cap (E \times
\{0,1\})} \geq n^{-1}$.

In this note, we shall consider subsets of more general linear
orders which behave similarly to uncountable subsets of $\real$ in
this way. In order to do so, we first define and investigate a
family of subsets of trees. Linear orders share a long and close
relationship with trees, and trees will feature strongly in what
follows. For convenience, we lay down some of the basic definitions.
A partially ordered set $(T,<)$ is a {\em tree} if the set of
predecessors of any given element of $T$ is well ordered. If $x,z
\in T$, we define the interval $(x,z] = \setcomp{y \in T}{x < y \leq
z}$. We introduce elements $0$ and $\infty$, not in $T$, with the
property that $0 < x < \infty$ for all $x \in T$, and define the
intervals $(0,z)$, $[x,\infty)$ and $[x,z]$ in the obvious manner.
In general, we say that $s \subseteq T$ is an {\em interval} if $y
\in s$ whenever $x \leq y \leq z$ and $x,z \in s$. We let
$\httree{x}$ be the order type of $(0,x)$ and, if $\alpha$ is an
ordinal, we let $T_\alpha$ be the level of $T$ of order $\alpha$;
that is, the set of all $x \in T$ satisfying $\httree{x} = \alpha$.
The {\em interval topology} of $T$ takes as a basis all sets of the
form $(x,z]$, where $x \in T \cup \{0\}$ and $z \in T$. This
topology is locally compact and scattered. We shall say that a
subset of $T$ is open if it is so with respect to this topology. If
$x \in T$, we let $x^+$ be the set of immediate successors of $x$.
We say that $y \in T$ is a {\em successor} if $y \in x^+$ for some
$x$ or, equivalently, $y \in T_{\xi + 1}$ for some $\xi$. In this
note, we shall only consider trees with the {\em Hausdorff}
property, i.e., if $A \subseteq T$ is non-empty and totally ordered,
then $A$ has at most one minimal upper bound. If $T$ is Hausdorff
then the interval topology on $T$ is Hausdorff in the usual,
topological sense. If $H$ is a subset of a tree $T$, the map
$\mapping{\pi}{H}{T}$ is {\em regressive} if $\pi(x) < x$ for all $x
\in H$. If $\alpha$ is a limit ordinal with cofinality $\kappa =
\cf{\alpha}$, we say that $(\alpha_\xi)_{\xi < \kappa}$ is a {\em
cofinal sequence} of ordinals if it is strictly increasing and
converges to $\alpha$.

\begin{defn}
Let $\alpha$ be a limit ordinal. We say that $H \subseteq T_\alpha$
is {\em simple} if, for some cofinal sequence $(\alpha_\xi)_{\xi <
\kappa}$, there is a injective, regressive map
$\mapping{\pi}{H}{\bigcup_{\xi < \kappa} T_{\alpha_\xi}}$.
\end{defn}

We will tie trees to compact linear orders in the next section.
Before doing so, we explore the properties of simple sets. Firstly,
it is worth noting that the choice of cofinal sequence in the
definition of simple sets is not important.

\begin{lem}
\label{cofinal} If $(\alpha_\xi)_{\xi < \kappa}$ and
$(\alpha^\prime_\xi)_{\xi < \kappa}$ are cofinal sequences, and
$\mapping{\pi}{H}{\bigcup_{\xi < \kappa} T_{\alpha_\xi}}$ is an
injective, regressive map, then there exists an injective,
regressive map $\mapping{\pi^\prime}{H}{\bigcup_{\xi < \kappa}
T_{\alpha^\prime_\xi}}$.
\end{lem}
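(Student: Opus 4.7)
The plan is to define $\pi'(x)$ as the unique predecessor of $x$ sitting at a carefully chosen level $\alpha'_{\eta(\xi(x))}$ of the second cofinal sequence, where $\xi(x)$ records at which level of the first sequence $\pi(x)$ lies. Concretely, for each $x\in H$ let $\xi(x)<\kappa$ be the unique index with $\pi(x)\in T_{\alpha_{\xi(x)}}$. Since $x\in T_\alpha$, the set of predecessors $(0,x)$ is well ordered of order type $\alpha$, so for every $\beta<\alpha$ there is a unique predecessor of $x$ at level $\beta$, which I denote by $x\restrict{\beta}$; regressivity of $\pi$ forces $\pi(x)=x\restrict{\alpha_{\xi(x)}}$.

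The auxiliary construction is a strictly increasing function $\mapping{\eta}{\kappa}{\kappa}$ satisfying $\alpha'_{\eta(\xi)}\geq\alpha_\xi$ for every $\xi<\kappa$, built by transfinite recursion: at stage $\xi$ the set $\{\tau<\kappa : \alpha'_\tau\geq\alpha_\xi\}$ is an end-segment of $\kappa$, because $(\alpha'_\tau)_{\tau<\kappa}$ is strictly increasing with supremum $\alpha>\alpha_\xi$; meanwhile $\sup_{\xi'<\xi}\eta(\xi')<\kappa$ by regularity of $\kappa=\cf{\alpha}$, so a choice of $\eta(\xi)$ above both bounds is possible. I then define $\pi'(x)=x\restrict{\alpha'_{\eta(\xi(x))}}$. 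Since $\alpha'_{\eta(\xi(x))}<\alpha$, the map $\pi'$ is regressive, and its range lies in $\bigcup_{\xi<\kappa}T_{\alpha'_\xi}$ by construction.

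For injectivity, take distinct $x,y\in H$. If $\xi(x)\neq\xi(y)$, then strict monotonicity of $\eta$ places $\pi'(x)$ and $\pi'(y)$ at distinct levels, so they are distinct. If instead $\xi(x)=\xi(y)=\xi$, then injectivity of $\pi$ gives $x\restrict{\alpha_\xi}=\pi(x)\neq\pi(y)=y\restrict{\alpha_\xi}$. A basic tree observation now finishes it: the predecessor of $x\restrict{\beta}$ at level $\alpha_\xi$ is itself a predecessor of $x$ and hence equal to $x\restrict{\alpha_\xi}$, so if $x\restrict{\beta}=y\restrict{\beta}$ for some $\beta\in[\alpha_\xi,\alpha)$ one would deduce $x\restrict{\alpha_\xi}=y\restrict{\alpha_\xi}$, contradicting what we just saw. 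Applying this with $\beta=\alpha'_{\eta(\xi)}\geq\alpha_\xi$ yields $\pi'(x)\neq\pi'(y)$.

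The only delicate point is the recursive construction of $\eta$: it must simultaneously be strictly increasing (to ensure cross-index injectivity via distinct levels) and dominate $\alpha_\xi$ at stage $\xi$ (so that the chosen level lifts $\pi(x)$ to a higher predecessor without overshooting $\alpha$). Both requirements fall out cleanly from the regularity of $\kappa$, and everything else is a routine verification of the tree properties.
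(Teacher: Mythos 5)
Your proposal is correct and is essentially the paper's argument: the paper likewise replaces $\pi(x)$ by the unique predecessor of $x$ at a level $\alpha'_\xi\geq\alpha_\xi$ (phrasing your explicit strictly increasing reindexing $\eta$ as ``passing to a subsequence'' of $(\alpha'_\xi)$), and proves injectivity from comparability of predecessors lying on a common level. Your case split and the transfinite construction of $\eta$ just make explicit what the paper leaves implicit.
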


\begin{proof}
By taking a subsequence of $(\alpha^\prime_\xi)_{\xi < \kappa}$ if
necessary, we can assume that $\alpha_\xi \leq \alpha^\prime_\xi$
for all $\xi < \kappa$. If $x \in H$ and $\pi(x) \in
T_{\alpha_\xi}$, let $\pi^\prime(x)$ be the unique element of
$[\pi(x),x] \cap T_{\alpha^\prime_\xi}$. Now suppose that
$\pi^\prime(x) = \pi^\prime(y)$. It follows that $\pi(x)$ and
$\pi(y)$ are comparable, and since $\pi^\prime(x)$ and
$\pi^\prime(y)$ share the same level, so do $\pi(x)$ and $\pi(y)$.
Therefore, $\pi(x) = \pi(y)$, giving $x = y$.
\end{proof}

The next result reveals an important permanence property of simple
sets.

\begin{prop}
\label{simplepermanent} Let $\alpha$ be a limit ordinal. Suppose
that $H \subseteq T_\alpha$, $(\alpha_\xi)_{\xi < \kappa}$ is a
cofinal sequence, and $\mapping{\pi}{H}{\bigcup_{\xi < \kappa}
T_{\alpha_\xi}}$ is a regressive mapping, with the property that
every fibre of $\pi$ is simple. Then $H$ is simple.
\end{prop}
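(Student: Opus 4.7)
My plan is to build the desired witness for $H$ by combining $\pi$ with injective regressive maps $\sigma_y$ that witness the simplicity of each fibre $H_y = \pi^{-1}(y)$. The natural candidate is $\pi'(x) = \sigma_{\pi(x)}(x)$. Regressivity of $\pi'$ is immediate, and the problems are (i) to secure injectivity, and (ii) to arrange the image of $\pi'$ to lie in the union of levels indexed by a single cofinal sequence in $\alpha$.

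The main obstacle is injectivity: a priori, one could have $\sigma_{y_1}(x_1) = \sigma_{y_2}(x_2)$ for $x_1 \neq x_2$ with $y_i = \pi(x_i)$ lying at distinct levels $\alpha_{\xi_1} \neq \alpha_{\xi_2}$. I will prevent this by forcing the ranges of the $\sigma_y$ for $y$'s in different levels $T_{\alpha_\xi}$ to sit inside disjoint families of levels of $T$.

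To implement this, fix a cofinal sequence $(\alpha'_\eta)_{\eta < \kappa}$ in $\alpha$ and partition the index set $\kappa$ into $\kappa$ pairwise disjoint unbounded subsets $(I_\xi)_{\xi < \kappa}$; such a partition exists because $\kappa$ is a regular cardinal (take the preimages of the columns under a bijection $\kappa \to \kappa \times \kappa$). For each $\xi < \kappa$, set
$$
A_\xi \;=\; \setcomp{\alpha'_\eta}{\eta \in I_\xi \mbox{ and } \alpha'_\eta > \alpha_\xi}.
$$
Each $A_\xi$ is cofinal in $\alpha$, contained in $(\alpha_\xi, \alpha)$, and the family $(A_\xi)_{\xi < \kappa}$ is pairwise disjoint. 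Invoking Lemma \ref{cofinal} with the cofinal sequence obtained by enumerating $A_\xi$ in increasing order, I replace each $\sigma_y$ (for $y \in T_{\alpha_\xi}$ in the image of $\pi$) by an injective, regressive map $\sigma_y : H_y \to \bigcup_{\beta \in A_\xi} T_\beta$. Since $\sigma_y(x) < x$ lies on the branch from $0$ to $x$ and sits at some level $\beta > \alpha_\xi$, it automatically satisfies $\sigma_y(x) > y$.

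Finally I set $\pi'(x) = \sigma_{\pi(x)}(x)$, which is regressive with range in $\bigcup_\xi \bigcup_{\beta \in A_\xi} T_\beta$. For injectivity, suppose $\pi'(x_1) = \pi'(x_2) = z \in T_\beta$. Then $\beta \in A_\xi$ for a unique $\xi$, forcing $\pi(x_1)$ and $\pi(x_2)$ to be predecessors of $z$ at the common level $\alpha_\xi$; hence they coincide, and injectivity of the common map $\sigma_{\pi(x_1)}$ yields $x_1 = x_2$. Since $\bigcup_\xi A_\xi$, enumerated in increasing order, is a cofinal sequence of length $\kappa$ in $\alpha$, the map $\pi'$ witnesses simplicity of $H$.
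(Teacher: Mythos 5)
Your proof is correct, but it takes a genuinely different route from the paper's. The paper fixes witnesses $\pi_w$ for the fibres, normalises them via Lemma \ref{cofinal} so that $\pi(x) \leq \pi_{\pi(x)}(x)$, partitions $H$ into the sets $\Gamma_{\xi,\xi^\prime}$ according to the pair of levels occupied by $\pi(x)$ and $\pi_{\pi(x)}(x)$, and then pushes each value up to a point $\sigma(x)$ chosen above an upper bound of all intervals $[\pi_{\pi(y)}(y),y]\cap(0,x]$ arising from earlier pieces; this uses the Hausdorff property of $T$ and $\cf{\alpha}=\kappa$ to bound fewer than $\kappa$ many sets, and its payoff is the strictly stronger conclusion that the intervals $[\sigma(x),x]$ are pairwise disjoint, which the paper extracts as Corollary \ref{pairwisedisjtintervals} and needs again in Proposition \ref{openpartition}. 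You instead pre-separate the ranges of the fibre witnesses into pairwise disjoint cofinal blocks of levels $A_\xi$, one block per level $T_{\alpha_\xi}$ met by the image of $\pi$, so that injectivity of $x\mapsto\sigma_{\pi(x)}(x)$ follows from disjointness of the blocks together with comparability of the predecessors of a fixed node; all the small verifications (each $A_\xi$ has order type $\kappa$ and is cofinal, $\pi(x)<\sigma_{\pi(x)}(x)$, the union $\bigcup_\xi A_\xi$ enumerates as a cofinal $\kappa$-sequence) check out. Your argument is cleaner and does not invoke the Hausdorff property, but it buys only injectivity: it does not make the intervals $[\pi^\prime(x),x]$ pairwise disjoint (two values in different blocks can still lie on a common branch), so if it replaced the paper's proof, Corollary \ref{pairwisedisjtintervals} would require a separate argument.
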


\begin{proof}
For each $w \in \bigcup_{\xi < \kappa} T_{\alpha_\xi}$, let
$\mapping{\pi_w}{\pi^{-1}(w)}{\bigcup_{\xi < \kappa}
T_{\alpha_\xi}}$ be a regressive, injective map. By Lemma
\ref{cofinal}, we can assume that $\pi(x) \leq \pi_{\pi(x)}(x)$ for
all $x \in H$. For $\xi \leq \xi^\prime < \kappa$, define
$$
\Gamma_{\xi,\xi^\prime} \;=\; \setcomp{x \in H}{\pi(x) \in
T_{\alpha_\xi} \mbox{ and }\pi_{\pi(x)}(x) \in
T_{\alpha_{\xi^\prime}}}.
$$
The sets $\Gamma_{\xi,\xi^\prime}$, $\xi \leq \xi^\prime < \kappa$,
are pairwise disjoint, and for every $x \in H$, there exist such
ordinals with the property that $x \in \Gamma_{\xi,\xi^\prime}$.
Take $\eta \leq \eta^\prime < \kappa$. We define $\sigma(x)$ for $x
\in \Gamma_{\eta,\eta^\prime}$ in the following way. First, observe
that the set
$$
\bigcup \setcomp{(0,x] \cap [\pi_{\pi(y)}(y),y]}{y \in
\Gamma_{\xi,\xi^\prime}, \xi \leq \xi^\prime \leq \eta^\prime \mbox{
and } (\xi,\xi^\prime) \neq (\eta,\eta^\prime)}
$$
has an upper bound $w < x$. Indeed, suppose first that $\xi \leq
\xi^\prime \leq \eta^\prime$, $(\xi,\xi^\prime) \neq
(\eta,\eta^\prime)$ and $y,z \in \Gamma_{\xi,\xi^\prime}$ are such
that
$$
(0,x] \cap [\pi_{\pi(y)}(y),y] \quad\mbox{and}\quad (0,x] \cap
[\pi_{\pi(z)}(z),z]
$$
are both non-empty. Then $\pi(y),\pi(z) < x$, so they are
comparable. As they occupy the same level $T_{\alpha_\xi}$, we have
$\pi(y) = \pi(z)$. Moreover, $\pi_{\pi(y)}(y),\pi_{\pi(y)}(z) < x$
and occupy the same level $T_{\alpha_{\xi^\prime}}$, thus $y = z$,
because $\pi_{\pi(y)}$ is injective. So $(0,x] \cap
[\pi_{\pi(y)}(y),y]$ is non-empty for at most one $y \in
\Gamma_{\xi,\xi^\prime}$. Because $(\xi,\xi^\prime) \neq
(\eta,\eta^\prime)$ and $T$ is Hausdorff, this intersection has an
upper bound $w_{\xi,\xi^\prime} < x$. Since $\xi \leq \xi^\prime
\leq \eta^\prime < \kappa = \cf{\alpha}$, the required upper bound
$w$ exists. Thus we can take $\sigma(x) \in \bigcup_{\xi < \kappa}
T_{\alpha_\xi}$, satisfying $w,\pi_{\pi(x)}(x) < \sigma(x)$.

Now let $x,y \in H$. We claim that if $[\sigma(x),x] \cap
[\sigma(y),y]$ is non-empty, then $x = y$. Indeed, take such $x$ and
$y$, and let $\xi \leq \xi^\prime$, $\eta \leq \eta^\prime$ satisfy
$x \in \Gamma_{\eta,\eta^\prime}$ and $y \in
\Gamma_{\xi,\xi^\prime}$. Without loss of generality, we can assume
that $\xi^\prime \leq \eta^\prime$. If $(\xi,\xi^\prime) =
(\eta,\eta^\prime)$ then, as above, $\pi(x)$ and $\pi(y)$ are
comparable, as are $\pi_{\pi(x)}(x)$ and $\pi_{\pi(y)}(y)$. Since
both pairs occupy the same levels respectively, we get $x = y$.
Instead, if $(\xi,\xi^\prime) \neq (\eta,\eta^\prime)$ then from the
construction above, it follows that
$$
[\sigma(x),x] \cap [\sigma(y),y] \;\subseteq\; [\sigma(x),x] \cap
[\pi_{\pi(y)}(y),y]
$$
is empty. Thus we must have $(\xi,\xi^\prime) = (\eta,\eta^\prime)$
and $x = y$.
\end{proof}

The next corollary follows immediately from the proof of Proposition
\ref{simplepermanent}.

\begin{cor}
\label{pairwisedisjtintervals}Let $H \subseteq T_\alpha$ be simple.
Then for every cofinal sequence $(\alpha_\xi)_{\xi < \kappa}$, there
exists a regressive map $\mapping{\pi}{H}{\bigcup_{\xi < \kappa}
T_{\alpha_\xi}}$ with the property that $x = y$ whenever
$$
[\pi(x),x] \cap [\pi(y),y]
$$
is non-empty.
\end{cor}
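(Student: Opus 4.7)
The plan is to extract the required map $\pi$ directly from the construction inside the proof of Proposition \ref{simplepermanent}, applied in the degenerate situation where every fibre of the outer regressive map is a singleton. In that proof, the map $\sigma$ that is built already enjoys exactly the pairwise disjointness property this corollary asserts, so setting $\pi = \sigma$ will be my candidate.

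First I would fix the given cofinal sequence $(\alpha_\xi)_{\xi < \kappa}$ and, using the simplicity of $H$ together with Lemma \ref{cofinal}, obtain an injective regressive map $\mapping{\pi_0}{H}{\bigcup_{\xi < \kappa} T_{\alpha_\xi}}$. I would then feed $\pi_0$ into the machinery of Proposition \ref{simplepermanent} in the role of the outer regressive map. Because $\pi_0$ is injective, each non-empty fibre $\pi_0^{-1}(w)$ is a singleton $\{x\}$, which is trivially simple: the inner injective regressive map $\pi_w$ may be chosen to be $\pi_w(x) = w$. The auxiliary condition $\pi_0(x) \leq \pi_{\pi_0(x)}(x)$ exploited in the proof then holds as equality, for free.

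Running the construction of $\sigma$ from that proof now produces a map $\mapping{\sigma}{H}{\bigcup_{\xi < \kappa} T_{\alpha_\xi}}$ which is regressive --- the upper bound $w$ built there sits strictly below $x$ thanks to the Hausdorff property of $T$ combined with $\kappa = \cf{\alpha}$ --- and for which $[\sigma(x),x] \cap [\sigma(y),y] \neq \varnothing$ forces $x = y$. Taking $\pi = \sigma$ delivers the corollary. The only point that requires any real checking, and hence the main potential obstacle, is verifying that the argument of Proposition \ref{simplepermanent} does not tacitly exploit non-triviality of its fibres; inspection shows it uses only the injectivity of the inner maps and the Hausdorff property of $T$, both of which remain perfectly valid in the singleton case. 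This matches the authors' remark that the corollary is an immediate byproduct of the proof of the proposition.
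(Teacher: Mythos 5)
Your proposal is correct and takes essentially the same route as the paper: the corollary is stated there as an immediate byproduct of the proof of Proposition \ref{simplepermanent}, obtained exactly as you describe by running that construction with an injective $\pi_0$ (adjusted via Lemma \ref{cofinal} to the given cofinal sequence) whose fibres are singletons, so that the map $\sigma$ built in that proof already has the disjoint-intervals property. Nothing further is needed.
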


Proposition \ref{simplepermanent} yields another straightforward
corollary.

\begin{cor}
\label{simpleunions} If $\alpha$ is a limit ordinal, $\kappa =
\cf{\alpha}$ and $H_\xi \subseteq T_\alpha$ is simple for all $\xi <
\kappa$, then so is the union $H = \bigcup_{\xi < \kappa} H_\xi$. In
particular, countable unions of simple subsets of $T_\alpha$ are
simple.
\end{cor}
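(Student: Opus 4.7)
The plan is to deduce this from Proposition \ref{simplepermanent} after a brief preliminary observation. The observation is that any subset of a simple set is simple: if $H \subseteq T_\alpha$ is simple via an injective, regressive $\pi \colon H \to \bigcup_{\eta < \kappa} T_{\alpha_\eta}$, then for any $H' \subseteq H$ the restriction $\pi|_{H'}$ retains both properties, so $H'$ is simple.

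Next, fix a cofinal sequence $(\alpha_\eta)_{\eta < \kappa}$ for $\alpha$. For each $x \in H$ let $\xi(x) < \kappa$ be the least index with $x \in H_{\xi(x)}$, and define $\pi(x)$ to be the unique element of $(0,x)$ at level $\alpha_{\xi(x)}$; this is well-defined because $(0,x)$ is well-ordered of type $\alpha$ and $\alpha_{\xi(x)} < \alpha$. The map $\pi \colon H \to \bigcup_{\eta < \kappa} T_{\alpha_\eta}$ is regressive. For any $w$ in its range, say $w \in T_{\alpha_{\xi_0}}$, every $x \in \pi^{-1}(w)$ satisfies $\xi(x) = \xi_0$, so $\pi^{-1}(w) \subseteq H_{\xi_0}$. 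By the preliminary observation, each fibre $\pi^{-1}(w)$ is simple. Proposition \ref{simplepermanent} then yields that $H$ itself is simple.

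For the "in particular" clause, a countable family $\{H_n\}_{n < \omega}$ of simple subsets of $T_\alpha$ can be extended to a $\kappa$-indexed family by setting $H_\xi = \varnothing$ for $\omega \leq \xi < \kappa$ whenever $\kappa > \omega$; the empty set is vacuously simple, so the first part applies. I do not anticipate a genuine obstacle: the whole argument is a matter of arranging the fibre structure so that Proposition \ref{simplepermanent} applies, and the choice of $\pi(x)$ at level $\alpha_{\xi(x)}$ automatically forces fibres to lie inside a single $H_{\xi_0}$, which is the only real content of the construction.
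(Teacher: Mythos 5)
Your proof is correct and follows essentially the same route as the paper: disjointify the family (you via least indices, the paper by assuming the $H_\xi$ pairwise disjoint), send $x \in H_\xi$ to its unique predecessor at level $\alpha_\xi$ so that each fibre lands inside a single $H_\xi$, and invoke Proposition \ref{simplepermanent}. Your explicit remark that subsets of simple sets are simple is a point the paper leaves implicit, but the argument is the same.
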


\begin{proof}
Assume the $H_\xi$ are pairwise disjoint. Let $(\alpha_\xi)_{\xi <
\kappa}$ be a cofinal sequence and define
$\mapping{\pi}{H}{\bigcup_{\xi < \kappa} T_{\alpha_\xi}}$ by letting
$\pi(x)$ be the unique element of $(0,x] \cap T_{\alpha_\xi}$,
whenever $x \in H_\xi$. Then $\pi^{-1}(w) \subseteq H_\xi$ whenever
$w \in T_{\alpha_\xi}$.
\end{proof}

We finish this section with a final result concerning simple
subsets. We say that $W \subseteq T$ is an {\em initial part of} $T$
if $x \in W$ whenever $x \in T$, $x \leq y$ and $y \in W$.

\begin{prop}
\label{openpartition} Let $T$ be a tree and suppose that the level
$T_\alpha$ is simple for every limit ordinal $\alpha$. Then there is
a partition of $T$ consisting entirely of open intervals.
\end{prop}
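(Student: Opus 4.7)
The plan is to build the partition by transfinite recursion on the levels of $T$, maintaining at each stage a partition of the elements processed so far into sets that are open in $T$ and order-convex. For each limit ordinal $\alpha$ with $T_\alpha \neq \varnothing$ I fix a regressive map $\pi_\alpha \colon T_\alpha \to T$ as given by Corollary~\ref{pairwisedisjtintervals}, so that the intervals $\{[\pi_\alpha(y), y]\}_{y \in T_\alpha}$ are pairwise disjoint; Lemma~\ref{cofinal} allows each cofinal sequence to be chosen freely, and I would select them so that $\pi_\alpha(y)$ lies at as high a level as possible, keeping the chain $(\pi_\alpha(y), y]$ short.

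At level zero, the partition is $\{\{r\} : r \in T_0\}$, where each singleton $\{r\} = (0, r]$ is a basic open set. At a successor level $\gamma = \delta + 1$, for each $x \in T_\gamma$ I adjoin $x$ to the class containing its immediate predecessor $x^- \in T_\delta$; since $\{x\} = (x^-, x]$ is a basic open set, the enlarged class remains open, and a routine check using the tree structure preserves order-convexity. At a limit level $\gamma$, for each $x \in T_\gamma$ I use the basic open chain $(\pi_\gamma(x), x]$ as the core of a new class containing $x$, merging into it every existing class that meets $(\pi_\gamma(x), x)$; the resulting class is open as a union of open sets, and is order-convex because the chain $(\pi_\gamma(x), x]$ links all the absorbed pieces into a single order-convex whole.

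The main obstacle is verifying that the limit step is internally consistent: that for distinct $x_1, x_2 \in T_\gamma$ the new classes attached to $x_1$ and $x_2$ remain disjoint, and that the classes freshly emerging at level $\gamma$ do not inadvertently destroy openness of pieces inherited from $\mathcal{P}_{<\gamma}$. This is exactly where Corollary~\ref{pairwisedisjtintervals} does its work: the intervals $[\pi_\gamma(x_1), x_1]$ and $[\pi_\gamma(x_2), x_2]$ are disjoint, so the chains $(\pi_\gamma(x_1), x_1]$ and $(\pi_\gamma(x_2), x_2]$ are disjoint, and each pre-existing class is absorbed into at most one new class. A short further argument, exploiting the fact that in a Hausdorff tree an element below $x_i$ lies on a unique chain up to $x_i$, confirms that the updated family of classes again consists of open intervals of $T$. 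Passing to the limit through all ordinals yields the desired partition of $T$.
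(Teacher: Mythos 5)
Your successor step is where the argument breaks. By adjoining each $x\in T_{\delta+1}$ to the class of its immediate predecessor, every class grows into a full subtree: starting from the singletons $\{r\}$, $r\in T_0$, after $\omega$ many successor steps the class of $r$ has become the entire cone $\setcomp{y\in T}{y\geq r \mbox{ and } \httree{y}<\omega}$, which contains large antichains. At the first limit level $\gamma$ this destroys your key claim. The intervals $[\pi_\gamma(x_1),x_1]$ and $[\pi_\gamma(x_2),x_2]$ are indeed pairwise disjoint by Corollary \ref{pairwisedisjtintervals}, but that only prevents a \emph{chain} from meeting two of them; a fat order-convex class can meet $(\pi_\gamma(x),x)$ for \emph{every} $x\in T_\gamma$ lying above its root, since $(\pi_\gamma(x),x)\subseteq (0,x)$ is contained in that cone. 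Concretely, let $T$ be $2^{<\omega}$ with the eventually-zero branches adjoined as the (countable, hence simple) level $T_\omega$: your stage-$\omega$ partition is the single class $2^{<\omega}$, which meets $(\pi_\omega(b),b)$ for every $b\in T_\omega$, so ``each pre-existing class is absorbed into at most one new class'' is false and the proposed new classes for distinct $b_1,b_2$ overlap instead of being disjoint. If you repair this by transitively merging overlapping classes, the construction collapses to the partition of $T$ into the cones above its minimal elements; the simplicity hypothesis is then never genuinely used, and your convexity argument (``the chain links all absorbed pieces'') no longer applies to the merged classes.

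The fix is to keep the classes thin, which is what the paper's proof does: elements appearing at successor levels are kept as \emph{singleton} classes (each such $\{x\}$ equals the basic open set $(y,x]$, $y$ the immediate predecessor of $x$), and only when a limit level $\eta$ is reached is the interval $[\sigma(x),x]$ glued onto the one class containing its bottom endpoint $\sigma(x)$. Here Corollary \ref{pairwisedisjtintervals} is applied with the cofinal sequence running through successor levels, so each $\sigma(x)$ is a successor and $[\sigma(x),x]$ is itself a basic open set. With that discipline the pairwise disjointness of the $[\sigma(x),x]$ really does yield disjoint new classes, and the remaining work (the paper's first part, on unions of coherent partitions of initial parts, plus the ``straightforward check'' that classes meeting $\bigcup_x[\sigma(x),x]$ are handled consistently) goes through. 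As written, your proof has a genuine gap at the limit step.
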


\begin{proof}
The proof comes in two parts. In the first part, we prove the
following claim. Let $\mathscr{W}$ be a family of initial parts of
$T$, with union $T$, and such that, given $V,W \in \mathscr{W}$,
either $V$ is an initial part of $W$, or vice-versa. Suppose further
that each $W$ has a partition $P_W$ consisting wholly of open
intervals of $W$, with the property that if $V$ is an initial
segment of $W$ then, for every $s \in P_V$, there exists $t \in P_W$
with $s \subseteq t$. Then $T$ has a partition $P$ consisting of
open intervals only.

Define $\sim$ on $T$ by declaring that $x \sim y$ if and only if
$x,y \in s$ for some $s \in P_W$, $W \in \mathscr{W}$. This is an
equivalence relation. Symmetry is immediate. If $x \in T$ then $x
\in s$ for some $s \in P_W$ and $W \in \mathscr{W}$, so $\sim$ is
reflexive. If $x \sim y$ and $y \sim z$ then take $s \in P_V$, $t
\in P_W$ with $x,y \in s$ and $y,z \in t$. Without loss of
generality, we may assume that $V$ is an initial segment of $W$, and
so there is $u \in P_W$ with $s \subseteq u$. Since $y \in t \cap
u$, we have $x,z \in t$, so transitivity holds. If $P$ is the
corresponding partition of $T$ then it is clear that each $s \in P$
is an open interval. This completes the first part of the proof.

For the second part, for each initial segment $W_\alpha =
\bigcup_{\xi < \alpha} T_\xi$, we construct a partition
$P_{W_\alpha}$ in such a way that the resulting family satisfies the
property above. Assume $(P_{W_\xi}$, $\xi < \alpha$, have been
constructed with the property in question. If $\alpha$ is a limit
ordinal then we simply define $P_{W_\alpha}$ as in part one of the
proof. If $\alpha$ is a successor ordinal $\eta + 1$ then there are
two cases. If $\eta$ is itself a successor ordinal then all we need
to do is set
$$
P_{W_\alpha} \;=\; P_{W_\eta} \cup \setcomp{\{x\}}{x \in T_\eta}.
$$
Instead, if $\eta$ is a limit ordinal, we use Corollary
\ref{pairwisedisjtintervals} to obtain a regressive map
$\mapping{\sigma}{T_\eta}{\bigcup_{\xi < \eta} T_{\xi+1}}$ with the
property that
$$
[\sigma(x),x] \cap [\sigma(y),y] \;=\; \varnothing
$$
whenever $x \neq y$. Notice that each $\sigma(x)$ is a successor, so
$[\sigma(x),x]$ is an open interval. For each $x \in T_\eta$, let
$s_x$ be the unique element of $P_{W_\eta}$ containing $\sigma(x)$.
Now define
$$
P_{W_\alpha} \;=\; \setcomp{s_x \cup [\sigma(x),x]}{x \in T_\eta}
\cup \setcomp{s \in P_\alpha}{s \cap \bigcup_{x \in T_\eta}
[\sigma(x),x] = \varnothing}.
$$
It is straightforward to check that $P_{W_\alpha}$ has the required
property. This completes the induction and the proof.
\end{proof}

\section{Compact linear orders and partition trees}\label{partitionsection}

As mentioned Section \ref{simplesubsets}, linear orders and trees
enjoy a close relationship. We will employ the established notion of
a partition tree of a linear order. Let $K$ be a compact, linear
order. We let an interval $a \subseteq K$ be called {\em trivial} if
it contains at most one point.

\begin{defn}
\label{partitiontree} We shall say that a tree $T$ with level
$T_\alpha$ of order $\alpha$ is an {\em admissible partition tree}
of $K$ if it satisfies the following properties

\begin{enumerate}
\item every $a \in T$ is a non-trivial, closed interval of $K$;
\item $a \leq b$ if and only if $b \subseteq a$;
\item $T_0 = \{K\}$;
\item if $a \in T$ contains two elements then $a^+$ is empty;
\item\label{trivintersect} if $a \in T$ contains at least three elements then
$a$ has exactly two immediate successors $b$ and $c$, satisfying
$$
\min a \;=\; \min b \;<\; \max b \;=\; \min c \;<\; \max c \;=\;
\max a;
$$
\item\label{treelimit} if $\alpha$ is a limit ordinal then $a \in T_\alpha$ if and
only if there exist $\alpha_\xi \in T_\xi$, $\xi < \alpha$, with $a
= \bigcap_{\xi < \alpha} a_\xi$.
\end{enumerate}
\end{defn}

The next lemma states some obvious consequences of the definition
above.

\begin{lem}\label{obviousconsequences} Let $T$ be an admissible partition tree of $K$.
\begin{enumerate}
\item if $a,b \in T_\alpha$ are distinct then $a \cap b$ is trivial;
\item \label{comparable}if $a \cap b$ is non-trivial then $a,b \in T$ are comparable;
\item \label{weight} if $L = \bigcup_{a \in T} \{\min a,\max a\}$
then given $u,v \in K$, $u < v$, there exist $x,y \in M$ such that
$u \leq x < y \leq v$. In particular,
$$
\setcomp{(x,y)}{x,y \in L, x < y}
$$
is a basis for the topology of $K$;
\item \label{ordinal} if $s \subseteq T$ is totally ordered then
$\setcomp{\min a}{a \in s}$ and $\setcomp{\max a}{a \in s}$ are well
ordered and conversely well ordered respectively. Moreover, if the
cardinality $\kappa$ of $s$ is infinite, then either the cardinality
of $\setcomp{\min a}{a \in s}$, or that of $\setcomp{\max a}{a \in
s}$, is equal to $\kappa$.
\end{enumerate}
\end{lem}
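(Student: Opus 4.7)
Parts (1), (2), and (4) of the lemma are routine consequences of the definitions; the substantive step is part (3). For part (1), the plan is transfinite induction on $\alpha$: the base case $\alpha = 0$ is vacuous, at a successor $\alpha = \beta + 1$ two distinct elements $a, b \in T_\alpha$ either have distinct parents in $T_\beta$ (whence $a \cap b$ is trivial by the inductive hypothesis applied to the parent intersection) or share a parent $d$ (whence (\ref{trivintersect}) forces $a \cap b$ to be the single shared endpoint of $d$'s two children), and at a limit $\alpha$, writing $a = \bigcap_{\xi < \alpha} a_\xi$ and $b = \bigcap_{\xi < \alpha} b_\xi$ via (\ref{treelimit}), some $a_\xi \neq b_\xi$ and induction finishes the job. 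Part (2) then follows at once from (1): given $a \cap b$ non-trivial with $\height(a) \leq \height(b)$, the ancestor $a'$ of $b$ at level $\height(a)$ has $a \cap a' \supseteq a \cap b$ non-trivial, so (1) forces $a = a'$ and hence $b \subseteq a$.

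\textbf{Part (3).} Fix $u < v$ in $K$. The plan is to analyse the family $\mathcal{A} = \{a \in T : u, v \in a\}$, which is totally ordered by (2) and hence well-ordered as a chain in $T$; by (\ref{treelimit}), at each limit height the branch intersection lies in $T$ and still contains $u, v$, so $\mathcal{A}$ has a maximum $a^*$. If $|a^*| = 2$, then $a^* = \{u, v\} \subseteq L$ and we are done. Otherwise $|a^*| \geq 3$, and (\ref{trivintersect}) provides a split point $w^* \in L$; maximality of $a^*$ forbids both $u, v$ from sitting in a common child of $a^*$, and this forces $u < w^* < v$ strictly. To produce a second point of $L$ in $[u, v]$, I would repeat the construction with the pair $(u, w^*)$, letting $a^{**}$ be the maximum of $\{a \in T : u, w^* \in a\}$. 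If $|a^{**}| = 2$ then $a^{**} = \{u, w^*\}$, giving $u \in L$, so we take $x = u, y = w^*$; otherwise the split point $w' \in L$ of $a^{**}$ cannot coincide with either $u$ or $w^*$ (either coincidence would force both points into a common child of $a^{**}$, contradicting maximality), so $u < w' < w^*$, and $w', w^* \in L \cap [u, v]$ are the required pair. The main obstacle here is showing that a single recursive step suffices; without that observation, the argument would devolve into a transfinite descent that could be awkward to close off. The basis assertion then follows by applying the first part of (3) to the pairs $(u, p)$ and $(p, v)$ whenever $(u, v)$ is a basic open neighbourhood of $p$ inside a given open set.

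\textbf{Part (4).} Any chain $s \subseteq T$ is well-ordered by the tree order (at each height at most one element of $s$ can appear, and heights form an ordinal set), so I would enumerate $s = (a_\xi)_{\xi < \lambda}$ with $a_\eta \subseteq a_\xi$ whenever $\xi \leq \eta$; then $\xi \mapsto \min a_\xi$ is weakly increasing and $\xi \mapsto \max a_\xi$ weakly decreasing, and the least element of any non-empty sub-collection is attained at its least index, yielding the well-ordering and converse well-ordering of $\{\min a : a \in s\}$ and $\{\max a : a \in s\}$ respectively. For the cardinality claim, each $a \in T$ is determined by its endpoints as a closed interval in $K$, so $a \mapsto (\min a, \max a)$ embeds $s$ into $\{\min a : a \in s\} \times \{\max a : a \in s\}$; if both factors had cardinality strictly less than $\kappa = |s|$, their product would also have cardinality less than $\kappa$ (since $\alpha \cdot \beta < \kappa$ whenever $\alpha, \beta < \kappa$), contradicting $|s| = \kappa$.
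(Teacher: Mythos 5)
Your proposal is correct and follows essentially the same route as the paper: transfinite induction on the level for (1), reduction to (1) via a common level for (2), taking the maximal tree element containing both points and splitting at most twice for (3), and monotonicity of endpoints along a chain for (4). The only (minor) divergence is the cardinality clause of (4), where the paper partitions the chain's index set into the indices at which $\min$ strictly increases and those at which $\max$ strictly decreases and observes their union is everything, whereas you inject $s$ into the product of the two endpoint sets and invoke cardinal arithmetic; both rest on the same fact that an element of $T$ is determined by its endpoints, so the arguments are interchangeable.
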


\begin{proof}
We prove (1) by induction on $\alpha$. Suppose that the result holds
for all $\xi < \alpha$. First, assume $\alpha = \xi + 1$ and take
distinct $a,b \in T_\alpha$. Let $a \in a_0^+$ and $b \in b_0^+$ for
some $a_0,b_0 \in T_\xi$. If $a_0 = b_0$ then by Definition
\ref{partitiontree}, part (\ref{trivintersect}), $a \cap b$ is
trivial. Otherwise, by the inductive hypothesis, $a \cap b \subseteq
a_0 \cap b_0$ is trivial. Now assume that $\alpha$ is a limit
ordinal, with $a,b \in T_\alpha$ distinct. Take sequences
$(a_\xi),(b_\xi)_{\xi < \alpha}$ as in Definition
\ref{partitiontree}, part (\ref{treelimit}). If $a_\xi = b_\xi$ for
all $\xi$ then $a = b$, thus $a_\xi \neq b_\xi$ for some $\xi$. It
follows that $a \cap b \subseteq a_\xi \cap b_\xi$. This completes
the proof of (1).

To prove (2), take $a,b \in T$ with $a \cap b$ non-trivial. Now let
$a_0 \leq a$ and $b_0 \leq b$ with $a_0$ and $b_0$ in the same
level, and either $a_0 = a$ or $b_0 = b$. Then $a_0 \cap b_0$ is
non-trivial, so they must be equal. (2) follows.

For (3), let $u,v \in K$ with $u < v$. Let $a$ be the greatest
element of $T$ containing $u$ and $v$. The existence of $a$ follows
by compactness. If $a = \{u,v\}$ is a two-element set then we are
done. Otherwise, by Definition \ref{partitiontree}, part
(\ref{trivintersect}), $a$ has distinct immediate successors $b$ and
$c$, with $x = \max b = \min c$. The maximality of $a$ ensures that
$u < x < v$. Now repeat with $x$ and $v$.

(4) If $s \subseteq T$ is totally ordered then it is well ordered.
Put
$$
E \;=\; \setcomp{\xi}{s \cap T_\xi \neq \varnothing}
$$
and given $\xi \in E$, let $a_\xi$ be the unique element of $s \cap
T_\xi$. Define also
$$
F \;=\; \setcomp{\eta \in E}{\min a_\xi < \min a_\eta \mbox{
whenever }\xi < \eta}
$$
and
$$
G \;=\; \setcomp{\eta \in E}{\max a_\xi > \max a_\eta \mbox{
whenever }\xi < \eta}.
$$
It is clear that $\setcomp{\min a}{a \in s}$ and $F$ share the same
order type, and likewise $\setcomp{\max a}{a \in s}$ has converse
order type equal to that of $G$. Moreover, $E = F \cup G$. Indeed,
if there exist $\xi,\xi^\prime < \eta$ such that $\min a_\xi = \min
a_\eta$ and $\max a_{\xi^\prime} = \max a_\eta$ then, assuming as we
can that $\xi \geq \xi^\prime$, we have $a_\xi = a_\eta$, which is
not allowed in an admissible partition tree. The cardinality
assertion follows immediately.
\end{proof}

By recursion, it is clear that an admissible partition tree exists
for every compact linear order $K$ with at least two elements.
Moreover, for any such $T$ and any branch $s \subseteq T$, we have
that $\bigcap s$ contains at most two elements. From now on, we
shall assume that all trees $T$ are admissible partition trees of
compact linear orders. Recall the discussion of the split interval
at the beginning of Section \ref{simplesubsets}. Given a compact
linear order $K$, an admissible partition tree $T$ of $K$, a limit
ordinal $\alpha$ and a non-simple subset $H \subseteq T_\alpha$, we
show that
$$
\bigcup_{a \in H} \{\min a,\max a\}
$$
behaves similarly to $[0,1] \times \{0,1\}$. This allows us to find
a necessary condition for $K$ to be fragmentable. In the next three
lemmas, we shall assume that $\alpha$ is a limit ordinal and
$(\alpha_\xi)_{\xi < \kappa}$ is a cofinal sequence.

\begin{lem}
\label{boundedregressive} Let $H \subseteq T_\alpha$. Then there
exists a regressive map $\mapping{\pi}{H}{\bigcup_{\xi < \kappa}
T_{\alpha_\xi}}$ with the property that, for all $w \in \bigcup_{\xi
< \kappa} T_{\alpha_\xi}$, we can find $b \in H$ such that the set
$$
\setcomp{\min a}{a \in \pi^{-1}(w)}
$$
is bounded above by $\min b$.
\end{lem}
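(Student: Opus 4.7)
The plan is to set $\pi(a)$ equal to the unique predecessor of $a$ at a carefully chosen level $\alpha_{\xi(a)}$ of the cofinal sequence. Writing $w_\xi(a)$ for the unique element of $T_{\alpha_\xi}$ below $a$ in the tree order, and $F_\xi(a) = \{a' \in H : w_\xi(a') = w_\xi(a)\}$ for the set of elements of $H$ sharing this predecessor with $a$, the task reduces to finding, for every $a \in H$, a level $\xi < \kappa$ at which the minima $\{\min a' : a' \in F_\xi(a)\}$ admit an upper bound of the form $\min b$ with $b \in H$. Once this is established, we define $\xi(a)$ to be the least such $\xi$ and set $\pi(a) = w_{\xi(a)}(a)$; the actual fibre $\pi^{-1}(w)$ is contained in $F_{\xi(a)}(a)$ for any $a \in \pi^{-1}(w)$, so the same $b$ serves as a bound.

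Let $S = \sup\{\min b : b \in H\}$, which exists in $K$ by compactness. If $S = \min b^*$ for some $b^* \in H$, then $b^*$ bounds the minima of all of $H$, and $\xi = 0$ works for every $a$. From now on, assume that $S$ is not attained. Put $s_\xi = \sup\{\min a' : a' \in F_\xi(a)\} \leq S$. Because $\{\min b : b \in H\}$ has supremum $S$ not attained, one sees that there exists $b \in H$ with $s_\xi \leq \min b$ if and only if $s_\xi < S$. Thus the goal reduces to showing $s_\xi < S$ for some $\xi < \kappa$.

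Suppose, for a contradiction, that $s_\xi = S$ for every $\xi < \kappa$. Distinct members of $T_\alpha$ have trivial intersection by Lemma~\ref{obviousconsequences}(1), so every $a' \in F_\xi(a) \setminus \{a\}$ either lies to the left of $a$ (giving $\min a' \leq \min a$) or strictly to the right, with $\max a \leq \min a'$. Because $\min a < S$ (otherwise $a$ itself would attain $S$), the identity $s_\xi = S$ forces the set $A_\xi^+ = \{a' \in F_\xi(a) : a < a'\}$ to be non-empty with $\sup\{\min a' : a' \in A_\xi^+\} = S$. For any $a' \in A_\xi^+$, the inclusion $a' \subseteq w_\xi(a)$ yields $\min a' \leq \max w_\xi(a)$, hence $S \leq \max w_\xi(a)$ for every $\xi$. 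Since $a = \bigcap_{\xi<\kappa} w_\xi(a)$ by Definition~\ref{partitiontree}(\ref{treelimit}), the monotone decreasing family $\{\max w_\xi(a)\}_{\xi<\kappa}$ in the compact linear order $K$ has infimum $\max a$, so $S \leq \max a$. Combined with $\max a \leq \min a' \leq S$ for $a' \in A_\xi^+$, this gives $S = \max a$ and then $\min a' = S$ for every $a' \in A_\xi^+$, contradicting the non-attainment of $S$.

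The main obstacle is this core combinatorial step: handling the dichotomy between ``left'' and ``right'' companions of $a$ inside $F_\xi(a)$, and then squeezing the minimum of any right-hand companion between $\max a$ and the shrinking $\max w_\xi(a) \searrow \max a$. This is where the compactness of $K$ and the limit-level intersection axiom of an admissible partition tree become essential; the rest of the argument is bookkeeping to confirm that $\xi(a)$ is well defined, that the resulting $\pi$ is regressive, and that every fibre is bounded as required.
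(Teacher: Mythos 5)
Your proof is correct and rests on the same two ingredients as the paper's: the fibre over $w = \pi(a)$ is contained in the interval $w$, hence its minima are bounded by $\max w$, and $\max w_\xi(a)$ decreases to $\max a$ by the limit-level intersection axiom, so once the supremum of $\{\min b : b \in H\}$ is not attained a deep enough level pushes the bound below $\min b$ for some $b \in H$. The paper argues this directly (pick $b$ with $\max a < \min b$, then a level with $\max \pi(a) < \min b$), whereas you take the contrapositive through the global supremum $S$ with an extra left/right case split, but the mathematical content is the same.
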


\begin{proof}
If $\setcomp{\min a}{a \in H}$ has a maximum element then there is
nothing to prove. Now suppose otherwise. Then, given $a \in H$,
there exists $b \in H$ with $\max a < \min b$. Since $a$ is a limit
element, we can take $\pi(a) < a$, $\pi(a) \in \bigcup_{\xi <
\kappa} T_{\alpha_\xi}$, such that $\max \pi(a) < \min b$. Now let
$w \in \bigcup_{\xi < \kappa} T_\xi$, with $\pi(a) = w$. By
definition, there exists $b \in H$ such that $\max \pi(a) < \min b$.
If $a^\prime \in \pi^{-1}(w)$ then
$$
\min a^\prime \;<\; \max a^\prime \;\leq\; \max \pi(a^\prime) \;=\;
\max \pi(a) \;<\; \min b.
$$
\end{proof}

\begin{lem}
\label{simpleendpoints} Let $H \subseteq T_\alpha$. Define
$$
L \;=\; \setcomp{b \in H}{\mbox{there is }x_b < \min b\mbox{ such
that }\setcomp{a \in H}{a \subseteq (x_b,\min b]}\mbox{ is simple}}.
$$
Then $L$ is simple. Similarly, if
$$
R \;=\; \setcomp{b \in H}{\mbox{there is }x_b > \max b\mbox{ such
that }\setcomp{a \in H}{a \subseteq [\max b,x_b)}\mbox{ is simple}}
$$
then $R$ is simple.
\end{lem}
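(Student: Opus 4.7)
The plan is to establish that $L$ is simple by constructing a regressive map $\mapping{\pi}{L}{\bigcup_{\xi<\kappa} T_{\alpha_\xi}}$ with simple fibres and then invoking Proposition~\ref{simplepermanent}. The argument for $R$ is the mirror image, with the corridor $[\max b,x_b)$ replacing $(x_b,\min b]$ throughout.

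To define $\pi$: for each $b \in L$, Definition~\ref{partitiontree}(\ref{treelimit}) represents $b$ as the intersection of a chain of strict predecessors $b_\xi \in T_\xi$ ($\xi < \alpha$), and by compactness $\min b_\xi$ increases to $\min b$. Since $x_b < \min b$, one may take the least $\eta_b < \kappa$ with $\min b_{\alpha_{\eta_b}} > x_b$ and put $\pi(b) := b_{\alpha_{\eta_b}}$. Every $b$ in a non-empty fibre $F_w := \pi^{-1}(w)$ then satisfies $b \subseteq w$ and $x_b < \min w$.

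The crux is to show each $F_w$ is simple. The guiding idea is to locate inside $F_w$ an anchor $c$ such that every other $b \in F_w$ lies in the simple set $S_c := \setcomp{a \in H}{a \subseteq (x_c,\min c]}$. For this I need $\min b > x_c$, which will come from $\min b \geq \min w > x_c$ whenever $c \in F_w$, and $\max b \leq \min c$, the delicate inequality. In the case where $\setcomp{\min b}{b \in F_w}$ attains a maximum, realised at some $c \in F_w$, the triviality of $b \cap c$ (Lemma~\ref{obviousconsequences}(1)) combined with the non-triviality of $c$ immediately yields $\max b \leq \min c$ for every other $b$; this places $F_w \setminus \{c\}$ inside $S_c$, and Corollary~\ref{simpleunions} delivers $F_w$ simple. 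In the opposite case, Lemma~\ref{boundedregressive} applied to $F_w$ furnishes a regressive map $\rho_w$ on $F_w$ whose every fibre $\rho_w^{-1}(u)$ obeys the stronger inequality $\max a < \min c$ for a common $c \in F_w$, as can be read off from the proof of that lemma; the same containment then puts each such fibre inside a simple $S_c$, and Proposition~\ref{simplepermanent} applied to $\rho_w$ yields that $F_w$ itself is simple. A final application of Proposition~\ref{simplepermanent} to $\pi$ concludes that $L$ is simple.

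The main obstacle is the asymmetry built into the hypothesis: the witness $x_b$ is attached to $b$ and cannot be decreased, so $\pi$ must be calibrated finely enough that a single anchor inside each fibre absorbs the remaining fibre elements within one corridor $(x_c, \min c]$. The heart of the technical work is the case split on whether $\setcomp{\min b}{b \in F_w}$ attains a maximum: this decides whether the absorption can be carried out directly or only after a further thinning of the fibre via Lemma~\ref{boundedregressive}.
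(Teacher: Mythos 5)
Your proof is correct and follows essentially the same route as the paper: the same regressive map $\pi$ calibrated so that $x_b < \min \pi(b)$, reduction to the fibres $F_w$ via Proposition~\ref{simplepermanent}, and Lemma~\ref{boundedregressive} to break each fibre into pieces absorbed by a single corridor $(x_c,\min c]$. The only cosmetic differences are that the paper runs the argument as a contradiction and avoids your case split by observing that $\setcomp{a \in F_w}{\min a \leq \min c}$ differs from $\setcomp{a \in F_w}{\min a < \min c}$ only by $\{c\}$ itself, so the non-strict bound in the statement of Lemma~\ref{boundedregressive} already suffices without appealing to the strengthened inequality inside its proof.
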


\begin{proof}
Suppose $L$ is not simple. For each $b \in L$, we can find $\pi(b)
\in \bigcup_{\xi < \kappa} T_{\alpha_\xi}$ such that $\pi(b) < b$
and $x_b < \min \pi(b)$. Since $\pi$ is regressive and $L$ is not
simple, by Proposition \ref{simplepermanent}, there exists $w \in
\bigcup_{\xi < \kappa} T_{\alpha_\xi}$ such that $E = \pi^{-1}(w)$
is not simple. Observe that whenever $a,b \in E$ satisfy $\min a <
\min b$, we have
$$
x_b \;<\; \min \pi(b) \;=\; \min \pi(a) \;\leq\; \min a \;<\; \max a
\;\leq\; \min b
$$
whence $a \subseteq (x_b,\min b]$. It follows that whenever $b \in
E$, the set
$$
\setcomp{a \in E}{\min a \leq \min b}
$$
is simple. By Lemma \ref{boundedregressive}, there exists a
regressive map $\mapping{\sigma}{E}{\bigcup_{\xi < \kappa}
T_{\alpha_\xi}}$ with the property that whenever $w \in \bigcup_{\xi
< \kappa} T_{\alpha_\xi}$, the set
$$
\setcomp{\min a}{a \in \sigma^{-1}(w)}
$$
bounded above by $\min b$, for some $b \in E$. Hence
$\sigma^{-1}(w)$ is simple for all $w$. Therefore $E$ is simple by
Proposition \ref{simplepermanent}, which is a contradiction.
Consequently, $L$ is simple. It is clear that the `right hand'
version of Lemma \ref{boundedregressive} holds, thus $R$ is also
simple.
\end{proof}

\begin{lem}
\label{condensation} Suppose that $H \subseteq T_\alpha$ is
non-simple. Then there exists a subset $C \subseteq H$ with the
property that whenever $c \in C$, $x,y \in K$ and $x < \min c < \max
c < y$, the sets
$$
\setcomp{a \in C}{a \subseteq (x,\min c]} \quad\mbox{and}\quad
\setcomp{a \in C}{a \subseteq [\max c,y)}
$$
are both non-simple.
\end{lem}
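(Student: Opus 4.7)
The statement is a tree-analogue of the classical ``condensation point'' theorem for uncountable subsets of $\real$, so the natural strategy is to remove from $H$ the points that fail to be two-sided ``non-simplicity points'' and argue that what remains is still non-simple. Concretely, I would apply Lemma \ref{simpleendpoints} to produce the two sets
\[
L \;=\; \setcomp{b \in H}{\exists x_b < \min b\mbox{ with }\setcomp{a \in H}{a \subseteq (x_b,\min b]}\mbox{ simple}}
\]
and
\[
R \;=\; \setcomp{b \in H}{\exists x_b > \max b\mbox{ with }\setcomp{a \in H}{a \subseteq [\max b,x_b)}\mbox{ simple}},
\]
both of which are simple. I then set $C = H \setminus (L \cup R)$.

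First I would show that $C$ is itself non-simple. Since $\kappa = \cf{\alpha}$ is infinite, Corollary \ref{simpleunions} yields that $L \cup R$ is simple, and if $C$ were also simple then $H = C \cup L \cup R$ would be a finite (hence countable) union of simple sets, contradicting the hypothesis. In particular $C$ is non-empty.

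Next I would verify the condensation property. Fix $c \in C$ and $x < \min c$. Because $c \notin L$, the set $\setcomp{a \in H}{a \subseteq (x,\min c]}$ is non-simple. This set decomposes as
\[
\setcomp{a \in C}{a \subseteq (x,\min c]} \;\cup\; \setcomp{a \in L}{a \subseteq (x,\min c]} \;\cup\; \setcomp{a \in R}{a \subseteq (x,\min c]}.
\]
The last two pieces are subsets of the simple sets $L$ and $R$ respectively, and restricting the witnessing injective regressive map to a subset preserves both injectivity and regressivity, so subsets of simple sets are simple. If $\setcomp{a \in C}{a \subseteq (x,\min c]}$ were simple, the whole union would be a finite union of simple sets and hence simple by Corollary \ref{simpleunions}, contradicting $c \notin L$. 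The identical argument, using $c \notin R$, handles $\setcomp{a \in C}{a \subseteq [\max c, y)}$ for any $y > \max c$.

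The main (and only) substantive obstacle is the preliminary Lemma \ref{simpleendpoints} which guarantees that $L$ and $R$ are simple; once that is in hand, the rest of the argument is an elementary bookkeeping exercise using the two permanence facts that subsets of simple sets are simple and that countable unions of simple subsets of $T_\alpha$ are simple.
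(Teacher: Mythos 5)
Your proposal is correct and follows essentially the same route as the paper: both define $C = H \setminus (L \cup R)$ via Lemma \ref{simpleendpoints} and use that subsets of simple sets are simple together with Corollary \ref{simpleunions}. The only (cosmetic) difference is in verifying the condensation property: you decompose $\setcomp{a \in H}{a \subseteq (x,\min c]}$ directly into its intersections with $C$, $L$ and $R$, whereas the paper re-applies the $M \mapsto M\setminus(L_M \cup R_M)$ construction to that sub-family and observes the result sits inside $C$; both arguments are equally valid.
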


\begin{proof}
If $M \subseteq T_\alpha$ is non-simple, let $C_M = M\setminus (L
\cup R)$, where $L$ and $R$ are defined as in Lemma
\ref{simpleendpoints}. We know that $C_M$ is non-simple by Lemma
\ref{simpleendpoints} and Corollary \ref{simpleunions}. Put $C =
C_H$. Let $c \in C$ and $x < \min c$. If we set
$$
M = \setcomp{a \in H}{a \subseteq (x,\min c]}
$$
then $M$ is non-simple. We can see that $C_M$, which is non-simple,
is a subset of $C \cap M = \setcomp{a \in C}{a \subseteq (x,\min
c]}$. Likewise, if $\max c < y$ then $\setcomp{a \in C}{a \subseteq
[\max c,y)}$ is non-simple.
\end{proof}

This allows us to give a necessity (and sufficient) condition for
the fragmentability of $K$ in terms of admissible partition trees.

\begin{prop}
\label{fragsimple} If $K$ is a compact, fragmentable, linearly
ordered set and $T$ is any admissible partition tree of $K$, then
there is a partition of $T$ consisting entirely of open intervals.
\end{prop}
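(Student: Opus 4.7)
The plan is to derive the conclusion from Proposition \ref{openpartition}, which reduces the problem to showing that every limit level $T_\alpha$ is simple. I will argue by contradiction: assume $T_\alpha$ is non-simple for some limit $\alpha$, and use the fragmenting metric to construct an explicit witness set $M$ that cannot be fragmented, contradicting the hypothesis. The construction proceeds in the spirit of the split-interval argument at the start of Section \ref{simplesubsets}, with Lemma \ref{condensation} playing the role that two-sided condensation points play for $\real$.

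First, apply Lemma \ref{condensation} to $H = T_\alpha$ to obtain a non-simple $C \subseteq T_\alpha$ with the condensation property. Using the fragmenting metric $d$, set $C_n = \setcomp{c \in C}{d(\min c,\max c) \geq n^{-1}}$; because each $c \in T_\alpha$ is non-trivial so that $\min c \neq \max c$, we have $C = \bigcup_{n \in \nat} C_n$, and by Corollary \ref{simpleunions} some $C_n$ must be non-simple. Re-apply Lemma \ref{condensation} to $C_n$ to produce a non-simple $C' \subseteq C_n$ with the condensation property, on which $d(\min c,\max c) \geq n^{-1}$ uniformly.

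Next, I need to clear out ``boundary'' elements so that condensation can always be invoked. Property (\ref{trivintersect}) of Definition \ref{partitiontree} says that when $a$ splits as $b \cup c$ with $\max b = \min c$, only the left child $b$ inherits $\min a$. Combined with property (\ref{treelimit}) at limit stages, this forces the branch ending at any $c \in T_\alpha$ with $\min c = \min K$ to be uniquely determined, so at most one element of $T_\alpha$ has $\min c = \min K$, and symmetrically at most one has $\max c = \max K$. Let $F \subseteq C'$ be the (at most two) such offending elements and put $C'' = C' \setminus F$. Since $F$ is finite, hence simple, Corollary \ref{simpleunions} implies $C''$ is still non-simple, and the condensation property descends to $C''$ by the same reasoning (a set obtained from a non-simple set by removing a simple subset remains non-simple).

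Finally, set $M = \bigcup_{c \in C''} \{\min c,\max c\}$ and invoke fragmentability at scale $n^{-1}$: there is an open $U \subseteq K$ with $U \cap M \neq \varnothing$ and $d$-$\diam{U \cap M} < n^{-1}$. Pick $p \in U \cap M$, so $p \in \{\min c_0, \max c_0\}$ for some $c_0 \in C''$; by construction $p$ is not an endpoint of $K$, so there exist $x_0,y_0 \in K$ with $x_0 < p < y_0$ and $(x_0,y_0) \subseteq U$. In the case $p = \min c_0$, apply condensation in $C''$ using $x_0$ on the left and $y = \max K$ on the right (valid because $\max c_0 < \max K$) to find some $a$ in the non-empty set $\setcomp{a \in C''}{a \subseteq (x_0,\min c_0]}$. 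Then $\min a,\max a \in (x_0,\min c_0] \subseteq (x_0,y_0) \subseteq U$, both lie in $M$, and $d(\min a,\max a) \geq n^{-1}$, contradicting the diameter bound; the case $p = \max c_0$ is symmetric. The main subtlety is the boundary bookkeeping of the third paragraph: Lemma \ref{condensation} provides information only in the presence of strict inequalities $x < \min c$ and $\max c < y$ inside $K$, and property (\ref{trivintersect}) of the partition tree is precisely what makes the set of obstructions at each level finite and thus harmlessly removable.
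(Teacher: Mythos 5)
Your proof is correct and follows essentially the same route as the paper: reduce via Proposition \ref{openpartition} to showing each limit level $T_\alpha$ is simple, split by the fragmenting metric using Corollary \ref{simpleunions}, and run the split-interval-style argument on the endpoints of the set supplied by Lemma \ref{condensation}. The only differences are cosmetic: your first application of Lemma \ref{condensation} (before the metric decomposition) is redundant, and you carry out the removal of the at most two intervals meeting $\min K$ and $\max K$ more explicitly than the paper does.
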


\begin{proof}
The first thing to show is that if $K$ is fragmentable then
$T_\alpha$ is simple for every admissible partition tree $T$ of $K$
and limit ordinal $\alpha$. Assume that $H \subseteq T_\alpha$ is
non-simple and let $d$ be a metric on $K$. Set
$$
H_n \;=\; \setcomp{a \in H}{d(\min a,\max a) \geq n^{-1}}.
$$
By Corollary \ref{simpleunions}, $G = H_n$ is non-simple for some
$n$. Let
$$
E = \bigcup_{c \in C} \{\min c,\max c\}
$$
where $C = C_G\setminus\{\min K,\max K\}$ and $C_G$ is as in Lemma
\ref{condensation}. Suppose that $U \cap E$ is non-empty. If $\min c
\in U$ for some $c \in C$, then there exists $x < \min c$ with
$(x,\min c] \subseteq U$. From Lemma \ref{condensation}, we know
that $b \subseteq U$ for some $b \in C$, whence $\diam{U \cap E}
\geq n^{-1}$. If $\max c \in U$ for some $c \in C$ then we reach the
same conclusion. Therefore $d$ does not fragment $K$. Since the
metric was arbitrary, we deduce that $K$ is not fragmentable. To
finish the proof, use Proposition \ref{openpartition}.
\end{proof}

This result allows us to complete the proof of Theorem
\ref{fragcharac}.

\begin{proof}[Proof of Theorem \ref{fragcharac}, (1) $\Rightarrow$ (2)]
Let $K$ be a compact, fragmentable, linearly ordered space. We show
that if $T$ is any admissible partition tree and
$$
L \;=\; \bigcup_{a \in T} \{\min a,\max a\}
$$
is as in Lemma \ref{obviousconsequences}, part (\ref{weight}), then
$L$ is a countable union of compact, scattered subsets.

Let $T$ be an admissible partition tree. By Proposition
\ref{fragsimple}, let $P$ be a partition of $T$ consisting entirely
of open intervals. If $a \in T$ then the set
$$
\setcomp{s \in P}{(0,a] \cap s \mbox{ is non-empty}}
$$
is non-empty and finite by compactness, and the fact that $P$ is a
partition. We define $\rank a$ to be the cardinality of this set.
Define
$$
L_n \;=\; \bigcup_{\rank a \leq n} \{\min a,\max a\}.
$$
For convenience, we set $L_0 = \{\min K,\max K\}$. We prove by
induction on $n \geq 0$ that $L_n$ is closed and scattered. For each
$n$, define
$$
\Delta_n \;=\; \setcomp{(x,y) \in L_n^2}{x < y\mbox{ and }(x,y) \cap
L_n \mbox{ is empty}}
$$
as in the proof of Theorem \ref{fragcharac}, (2) $\Rightarrow$ (3).
Assuming that $L_n$ is closed and scattered, and prove that
$L_{n+1}$ shares these properties by showing that
$$
L_{n+1}\setminus L_n = \bigcup_{(x,y) \in \Delta_n} (x,y) \cap
L_{n+1}
$$
and that each such set $(x,y) \cap L_{n+1}$ is scattered and closed
in $(x,y)$. Let $w \in L_{n+1}\setminus L_n$. There exists $b \in T$
with $\rank b = n+1$, such that $w$ is an endpoint of $b$. If $b \in
s \in P$ then $(0,b]\setminus s$ is a closed, bounded interval, so
has a greatest element $a$, of rank $n$. Let $a$ have immediate
successors $c$ and $d$, with
$$
\min a \;=\; \min c \;<\; \max c \;=\; \min d \;<\; \max d \;=\;
\max a.
$$
Without loss of generality, we can assume that $c
\leq b$. Necessarily, $\rank c = n+1$. There are two cases: $\rank d
= n$ or $\rank d = n+1$. If $\rank d = n$ then let $x = \min a =
\min c$ and $y = \max c = \min d$. Since $b \subseteq c$ and $w
\notin L_n$, we have $x < w < y$. If $v \in (x,y)$ and $v$ is an
endpoint of some $e \in T$, then by Lemma \ref{obviousconsequences},
part (\ref{comparable}), $c$ and $e$ are comparable, and moreover $c
\leq e$. Since $\rank c = n+1$, we have $v \notin L_n$. This means
that $(x,y) \in \Delta_n$. Moreover, if $v \in L_{n+1}$ then since
$c \leq e$ and $\rank e = n+1 = \rank c$, we have $e \in s$.
Conversely, if $v \in (x,y)$ is an endpoint of some $e \in s$, then
$v \in L_{n+1}$. Therefore
$$
(x,y) \cap L_{n+1} \;=\; (x,y) \cap \bigcup_{e \in s} \{\min e, \max
e\}.
$$
Since $s$ is a closed interval, we see from Lemma
\ref{obviousconsequences}, part (\ref{ordinal}) and Definition
\ref{partitiontree}, part (\ref{treelimit}), that $(x,y) \cap
L_{n+1}$ is scattered and closed in $(x,y)$. If $\rank d = n+1$ then
let $x = \min a$ and $y = \max a$. We use a similar argument to show
that $(x,y) \in \Delta_n$ and $(x,y) \cap L_n$ is scattered and
closed in $(x,y)$.
\end{proof}

We finish this section with proofs of Proposition \ref{fragweight}
and Corollaries \ref{avilesrn}, \ref{dualr} and \ref{gru}.

\begin{proof}[Proof of Proposition \ref{fragweight}]
Let $T$ be an admissible partition tree of $K$. Suppose that
$T_\alpha$ is non-empty for all $\alpha < \kappa$. If $\alpha <
\kappa$ is a limit ordinal, note that, as $T_\alpha$ is simple by
Proposition \ref{fragsimple}, we have $\card{T_\alpha} \leq
\card{\bigcup_{\xi < \alpha} T_\xi}$. By a simple transfinite
induction, it follows that $\card{T_\alpha} < \kappa$ for all
$\alpha < \kappa$. Now, for every limit $\alpha < \kappa$, choose
$a_\alpha \in T_\alpha$. Since $T$ splits into a partition $P$ of
open sets, there exists $\sigma(a_\alpha) < a_\alpha$ with
$\sigma(a_\alpha),a_\alpha \in s_\alpha \in P$. We obtain a
regressive map $\mapping{\tau}{L}{\kappa}$ by setting $\tau(\alpha)
= \httree{\sigma(a_\alpha)}$, where $L$ is the set of limit ordinals
in $\kappa$. By the pressing down lemma, there is an ordinal $\xi$
and a stationary set $E \subseteq L$ such that $\tau(\alpha) = \xi$
for all $\alpha \in E$. It is well known in set theory that the
union of strictly less than $\kappa$ non-stationary subsets of
$\kappa$ is again non-stationary. Thus, as $\card{T_\xi} < \kappa$,
we conclude that there is $w \in T_\xi$ and a stationary subset $F$
of $E$ such that $\sigma(a_\alpha) = w$ for all $\alpha \in F$.
Being stationary, $F$ is unbounded in $\kappa$, and since $w \in
s_\alpha$ for all $\alpha \in F$, we obtain an interval of length
$\kappa$ in $T$. Therefore $K$ contains a copy of $\kappa$ by Lemma
\ref{obviousconsequences}, part (\ref{ordinal}). It follows that if
$K$ contains no copy of $\kappa$, every admissible partition tree
$T$ of $K$ has height strictly less than $\kappa$. This, together
with the fact that $\card{T_\alpha} < \kappa$ for all $\alpha <
\kappa$, means that $\card{T} < \kappa$, so the weight of $K$ is
strictly less than $\kappa$ by Lemma \ref{obviousconsequences}, part
(\ref{weight}).
\end{proof}

\begin{proof}[Proof of Corollary \ref{avilesrn}]
If $K$ is the continuous image of a RN compact then it is the
continuous image of a fragmentable compact and thus fragmentable by
\cite[Proposition 2.8]{ribarska:87}. Hence, by Theorem
\ref{fragcharac}, $K$ is RN compact.
\end{proof}

\begin{proof}[Proof of Corollary \ref{dualr}]
By \cite[Theorem 1.1]{ribarska:92}, $K$ is fragmentable. By
\cite{talagrand:86}, $K$ contains no copy of $\wone$. Thus $K$ is
metrisable by Proposition \ref{fragweight}.
\end{proof}

\begin{proof}[Proof of Corollary \ref{gru}]
By \cite[Theorem 7]{smith:08}, if $K$ is Gruenhage then $C(K)$
admits an equivalent norm with a strictly convex dual norm. Now
apply Corollary \ref{dualr}.
\end{proof}

\bibliographystyle{amsplain}

\end{document}